\newcommand{\dist}{{\rm dist}}
\providecommand{\rset}[1]{\mathbb{R}^}
\providecommand{\norm}[1]{\lVert#1\rVert}
\DeclareMathOperator{\ri}{ri}
\begin{document}

\title{Linear convergence of dual coordinate descent on non-polyhedral convex problems
}

\titlerunning{Linear convergence of dual coordinate descent}        

\authorrunning{I. Necoara \textit{and} O. Fercoq }

\author{Ion Necoara$^{\dagger\ast}$ \and Olivier Fercoq $^{\ddagger}$}

\institute{$^{\dagger}$Ion Necoara \at Automatic Control and Systems Engineering Department, \\ 
University Politehnica Bucharest, \\ 
060042 Bucharest, Romania,\\ 
Email: \url{ ion.necoara@acse.pub.ro}
\and
$^{\ddagger}$Olivier Fercoq  \at LTCI, CNRS, Telecom ParisTech, \\
Universite Paris-Saclay, \\ 
75013 Paris, France, \\ 
Email: \url{ olivier.fercoq@telecom-paristech.fr. }
\and
$^{\ast}$\textit{Corresponding author} (\email{\tt ion.necoara@acse.pub.ro}). }

\date{Received: date / Accepted: date}

\maketitle

\begin{abstract}
This paper deals with constrained convex  problems, where the objective function is smooth  strongly convex and the  feasible set is  given as the intersection of a large number of closed convex (possibly non-polyhedral) sets.  In order to deal efficiently with the complicated constraints we consider a dual formulation of this problem.  We prove that the corresponding dual function satisfies a quadratic growth property  on any sublevel set, provided that the objective function is smooth and strongly convex and the sets verify the Slater's condition. To the best of our knowledge, this work is the first deriving a quadratic growth condition  for the dual under these general assumptions. Existing works derive similar quadratic growth conditions under more conservative assumptions, e.g.,    the sets need to be  either polyhedral or compact.  Then, for finding the minimum  of the dual problem, due to its  special composite structure,  we  propose  random (accelerated) coordinate descent  algorithms.  However, with the existing theory one can prove that such methods converge only  sublinearly. Based on our new quadratic growth property  derived for the dual,  we now show that such methods have faster convergence, that is the dual random (accelerated) coordinate descent  algorithms converge linearly.    Besides providing a general dual framework for the analysis of randomized coordinate descent  schemes, our results resolve an open problem in the literature related to the convergence of Dykstra algorithm on  the best feasibility problem for a collection of convex sets.  That is,   we  establish linear convergence  rate for the  randomized Dykstra algorithm when the convex sets satisfy the Slater's condition and derive also a new accelerated variant for the Dykstra algorithm. 

\keywords{ Convex problems  \and non-polyhedral constraints  \and quadratic growth \and dual coordinate descent \and  linear convergence.}

\subclass{90C25 \and  90C15 \and 65K05.}
\end{abstract}

	
\section{Introduction}
\noindent   The main problem of interest in this paper is the minimization of a smooth strongly convex function over the intersection of a finite number of convex (possibly non-polyhedral) sets:
\begin{align}
\label{bap-vv}
 \min\limits_{x \in \rset^n} & \; g(x)    \quad   \text{s.t.}  \;\;\; x \in X:=\bigcap_{i=1}^m  X_i,
\end{align}
where we assume that the objective function $g : \rset^n \to \rset^{} $ is smooth and strongly convex. Moreover, we consider that the number of sets  $m$ from the intersection is very large and each set $X_i$ is closed convex and simple (by simple we mean that one can easily project onto that set, e.g., hyperplanes, halfspaces, balls, etc). This model covers, in particular,  feasibility problems (see \cite{BauBor:96} for a survey), such as the best approximation problem that consists of  finding the projection of a given point $v \in \rset^n$ in the intersection of some closed convex sets \cite{BoyDyk:86}: 
\begin{align}
\label{bap}
\min\limits_{x \in \rset^n} & \;  \frac{1}{2}\norm{x -v}^2    \quad   \text{s.t.}  \;\;\; x \in \bigcap_{i=1}^m  X_i.
\end{align} 
Note that the linear support vector machine (SVM)  can be  formulated as problem \eqref{bap}, where $v=0$ and each set $X_i$ is a given halfspace.  However, the domain of applicability of  optimization model  \eqref{bap-vv} extends beyond feasibility problems and SVM. For example, when applying an (accelerated) gradient or proximal point algorithm  to solve the convex optimization problem (that covers, in particular, the large class of cone programming) 
$$ \min_{x \in \rset^n} \; \phi(x) \;\;\;  \text{s.t.} \;\;\;  x \in \bigcap_{i=1}^m  X_i, $$  
we need in each iteration to find an approximate solution of the following subproblem  for a given point $\tilde x$ and a parameter $\alpha >0$ \cite{Nes:04}: 
\[   \min_{x \in \rset^n} \;  \ell(x; \tilde x) + \frac{1}{2\alpha} \| x - \tilde x\|^2 \quad  \text{s.t.}  \quad x \in \bigcap_{i=1}^m  X_i,  \]
where either $ \ell(x; \tilde x) = \phi(\tilde x) + \langle  \nabla \phi(\tilde x), x - \tilde x \rangle$, when (accelerated) gradient algorithm is applied, or  $ \ell(x; \tilde x) = \phi(x) $, when (accelerated) proximal point  algorithm is used, respectively.  Clearly, this subproblem fits into the settings considered  for problem  \eqref{bap-vv}, since in this case the objective function $g(x) = \ell(x; \tilde x) + \frac{1}{2\alpha} \| x - \tilde x\|^2$   is always strongly convex and also smooth provided that e.g. $\phi$ is smooth.  Optimization problem  \eqref{bap-vv}  can be also used as a  modeling paradigm for solving many engineering problems such as   radiation therapy treatment planning \cite{HerChe:08}, magnetic resonance imaging \cite{SamKho:04}, wavelet-based denoising \cite{ChoBar:04}, color imaging \cite{Sha:00}, antenna design \cite{GuSta:04}, sensor networks \cite{BlaHer:06}, data compression \cite{ComPes:11,LieYan:05}, neural networks \cite{StaYan:98}  and  optimal control \cite{PatNec:17}.  

\vspace{0.2cm}

\noindent Our goal is to devise efficient algorithms with mathematical gurantess of convergence for solving the optimization problem \eqref{bap-vv}, and, in particular, \eqref{bap},  when $m$ is large and the sets $X_i$'s are not all polyhedral.   Basically, we can identify three popular classes of algorithms to solve such optimization problems: interior-point, active set and first order methods \cite{Nes:04}. However, the first two classes of algorithms encounter numerical difficulties when $m$ is large.  Furthermore, although primal projected  first order algorithms (including coordinate descent type schemes) achieve linear convergence for smooth strongly convex constrained minimization, they require exact projection onto the feasible set $X$ \cite{Nes:04}. Note that  the projection problem \eqref{bap} can be as difficult as the original problem \eqref{bap-vv}, hence, when the projection onto the feasible set $X$ is complicated  primal first order methods are also not applicable.\\    

\noindent Algorithmic alternatives to convex problems with complicated feasible set are the dual first order methods.  Note that one of the most efficient  projection schemes for  the best approximation problem \eqref{bap} is the Dykstra algorithm \cite{BoyDyk:86,ComPes:11}, which can be interpreted as a dual coordinate descent scheme. Dual gradient-based  methods are able to handle easily complicated constraints, but they have typically sublinear convergence rate  even when the primal problem has smooth and strongly convex objective function \cite{NecNed:14}.   There are few exceptions: e.g., \cite{NecNed:15}  proves that the dual function of   \eqref{bap-vv}  satisfies an error bound condition, provided that $g$ is smooth and strongly convex and $X_i$'s are polyhedral sets;  \cite{GidPed:18} proves that augmented dual function  of   \eqref{bap-vv}  satisfies a quadratic growth condition, provided that $g$ is smooth and strongly convex and $X_i$'s are bounded sets.  Error bound and quadratic growth conditions  are equivalent and both represent relaxations of the strong convexity condition of a function,  see \cite{NecNes:15} for more details.  Under these relaxation conditions  one can  prove linear convergence for first order  methods (including coordinate descent type algorithms) \cite{NecNes:15,NecCli:16,FerQu:18}.  However, requiring $X_i$'s to be all either polyhedral or bounded sets, restricts drastically the domain of applicability of the optimization problem \eqref{bap-vv} (e.g., we cannot tackle second-order cone programming).  In this paper we prove that the dual function of  \eqref{bap-vv}, with $g$ smooth and strongly convex and the sets $X_i$'s verifying Slater's condition,  satisfies a quadratic growth property  on any sublevel set.  Then, we show that coordinate descent-based methods are converging linearly when solving the dual problem.  More precisely, the main contributions of this paper are:

\begin{enumerate}
\item[$(i)$] We first derive a composite dual formulation of the  convex problem  \eqref{bap-vv} that is formed as a sum of two convex terms: one is smooth and another is general but simple and separable. Then, we prove a   quadratic growth property on any sublevel set  for this dual  composite function  under  the assumptions that the objective function $g$ is smooth and strongly convex and the general (possibly non-polyhedral) convex sets $X_i$'s satisfy the Slater's condition. Hence, our result extends in a nontrivial way the existing results of  \cite{NecNed:15} (for polyhedral sets) and  \cite{GidPed:18} (for bounded convex sets). 

\item[$(ii)$] Given the structured composite form for  the dual problem  of  \eqref{bap-vv} we consider dual (accelerated) coordinate descent algorithms for solving it.   It is well-known that such methods  converge sublinearly when solving  smooth (dual) convex problems \cite{Nes:12,FerRic:15,LuXia:14,RicTak:14}, although in practice one can observe a faster (linear) convergence.  However, based on the quadratic growth property derived in this paper for the dual, we now prove that  dual (accelerated) coordinate descent algorithms have faster convergence rates, that is they converge linearly.   

\item[$(iii)$]  As a consequence of our results,  we implicitly establish linear rate of the classic  randomized Dykstra algorithm for solving the dual  of the best feasibility problem \eqref{bap}.   From our knowledge, Dykstra algorithm was proved to converge linearly only for polyhedral sets  and it is a long standing open question whether a similar result holds for more general sets \cite{BoyDyk:86,ComPes:11,DeuHun:94}. We  answer positively to this open question, proving that randomized Dykstra algorithm is converging linearly for the general class of sets satisfying the Slater  condition. We also derive for the first time an acceleration of  the Dykstra algorithm, which also converge linearly and usually faster than basic Dykstra. 
\end{enumerate}

\noindent Let us emphasize the following points of our contributions. Firstly, although our proof  for the quadratic growth property  uses some ideas from \cite{GidPed:18}, it requires new concepts and techniques,  since we are dealing with (possibly) unbounded and non-polyhedral sets verifying just Slater's condition.  Second, using the quadratic growth we can prove faster convergence rates  for dual (accelerated) coordinate descent algorithms than was previously known.  Thirdly, the  Dykstra algorithm was known to converge linearly only for polyhedral sets \cite{DeuHun:94,Pan:17}. Since Dykstra can be interpreted as a  coordinate descent scheme for solving the dual of \eqref{bap}, we now show  that it converges linearly on general sets satisfying Slater's condition.   We also derive an accelerated variant of Dykstra algorithm, which, usually, has better convergence rate than its non-accelerated counterpart.     

\vspace{0.2cm}
	
\noindent \textit{Notation}. We denote by $\Pi_{X} (x)$ the projection of the point $x$ onto the convex set $X$. We also denote $\text{dist}(x,X) = \min_{z \in X}\norm{z-x} = \| x - \Pi_{X} (x) \|$. The relative interior of $X$ is denoted by $\ri(X)$. For a convex function $g : \rset^n \to \rset^{}$ we define its Fenchel conjugate as $g^*(y)  =  \max_{x \in \rset^n} \langle x,y\rangle -g(x)$.  The indicator function of $X$ is denoted by $\mathbb{I}_X(\cdot)$.  For the support function of the convex set $X$, which is the Fenchel conjugate of the indicator function,  we use the notation $\text{supp}_X(y) := \max_{x \in X} \langle y, x\rangle$.  For a given $x \in X$ we denote the normal cone by ${\cal N}_{X}(x) = \{y: \; \langle y, z-x \rangle \leq 0 \;\; \forall z \in X   \}$. For simplicity we omit the transpose, i.e. instead of  $y=(y_1^T \cdots y_M^T)^T$ we use  $y=(y_1;\cdots;y_M)$, where $y_i \in \rset^n$.


\section{Preliminaries}
We aim at minimizing a smooth strongly convex  function over the intersection of a large  number of simple closed convex sets \eqref{bap-vv}, which for convenience we recall it again here:
\begin{align}
\label{bap-v}
\min\limits_{x \in \rset^n} & \; g(x)    \quad   \text{s.t.}  \;\;\; x \in \bigcap_{i=1}^m  X_i.
\end{align}  
We recall  that $g: \rset^n \to \rset^{}$ is smooth (i.e., it has Lipschitz continuous gradient) and   strongly convex if there exist constants $0 < \sigma < L$ such that the following inequalities hold:
\[   \frac{\sigma}{2} \|y - x\|^2 \leq g(y) - g(x) - \langle \nabla g(x), y-x \rangle \leq \frac{L}{2} \|y - x\|^2 \quad \forall x, y \in \rset^n. \]    
Note that by a proper scalling of $g$ we can always assume $\sigma=1$. Therefore, in the sequel we consider $1$-strongly convex function $g$ and with $L$-Lipschitz continuous gradient.  We denote the intersection by $X : = \bigcap_{i=1}^m X_i$ and we consider that the projection onto each sets $X_i$ can be computed efficiently. For example, when the set $X_i$ is  a hyperplane, a halfspace or a ball the projection can be computed in   closed form.  Further,  let us define a few fundamental properties  on the sets $X_i$, which are often considered in the literature, see e.g.,  \cite{BauBor:96,BecTeb:03,Ned:11,NecRic:18}. For simplicity, we consider a uniform probability distribution over the set $[m]$ and thus for any scalar random variable $\theta_i$ we define its expectation as $ \mathbb{E}[  \theta_i] = 1/m \sum_{i \in [m]} \theta_i$.
	
\begin{definition}[(Bounded) Linear Regularity]
\label{def:1}
The collection of sets	$\{X_i\}_{i=1}^{m}$ has bounded linear regularity property if  for any $r>0$ there exists $\mu > 0$ such that:
\begin{align}
\label{blr}
\mu \cdot \text{dist}^2(x,X)   \le \mathbb{E}[\text{dist}^2(x,X_i)]  \qquad \forall x  \in B(0;r).
\end{align}
When the previous inequality holds for any $x \in \rset^n$ we say that the sets have linear regularity property:
\begin{align}
\label{lr}
\mu \cdot \text{dist}^2(x,X)  \le \mathbb{E}[\text{dist}^2(x,X_i)]  \qquad \forall x  \in \rset^n.
\end{align}
\end{definition}
	
\noindent It follows  from Definition  \ref{def:1} that $\mu \in (0, \; 1]$. Indeed, since   $\text{dist}(x,X_i) \leq   \text{dist}(x,X)$ for all $i \in [m]$, we have: 
\[ \mu \cdot \text{dist}^2(x,X)   \le \mathbb{E}[\text{dist}^2(x,X_i)]  \leq  \mathbb{E}[\text{dist}^2(x,X)] =  \text{dist}^2(x,X),   \]
which proves that $\mu \in (0, \; 1]$. Note that $\mu$ is related to the condition number of the set intersection $X = \bigcap_{i=1}^m  X_i$, see  \cite{NecRic:18} for a detailed discussion.  Moreover, $\mu=1$ is the ideal case, while $\mu$ close to zero is the difficult case (in fact, for $\mu=0$ inequality \eqref{lr} always holds).  Let us also recall  the Slater's condition:	

\begin{definition}[Slater's condition]
Let	$\{X_i\}_{i=1}^m$ be a collection of closed convex sets. Also assume that there is $0 \leq r \le m$ such that the sets $X_{r+1}, \cdots, X_m $ are polyhedral. Then, the sets  $\{X_i\}_{i=1}^m$ satisfy \textit{Slater's condition} if the following property holds:
\begin{align}
\label{slater}
\left(\bigcap_{i=1}^r \ri(X_i) \right) \bigcap \left( \bigcap_{i=r+1}^m X_i \right) \neq \emptyset.
\end{align}
\end{definition}

\noindent We now recall the following classical result stating the relation between the two definitions given above, whose  proof can be found e.g.,  in \cite{BauBor:99} (Corrolary 3 and 6).
\begin{corollary}
\label{th:slater-lr}
Let $\{X_i\}_{i=1}^{m}$ be a finite collection of closed convex sets, where for some $r \in [0:m]$, the sets $X_{r+1}, \cdots, X_m $ are polyhedral. Suppose that the Slater's condition \eqref{slater} holds. Then, the bounded linear regularity  \eqref{blr} holds.  Moreover, if $X$ is bounded, then the global linear regularity property \eqref{lr} holds. 
\end{corollary}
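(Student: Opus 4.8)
\noindent\emph{Proof sketch.} The plan is fourfold: replace the average of the individual squared distances by their maximum; dispose of the polyhedral sets via Hoffman's lemma; establish the regularity of the remaining finitely many sets (which share a relative‑interior point) by a compactness argument that reduces matters to a local, separation‑type estimate; and, in the bounded case, upgrade to a global estimate through the recession cones. For the first point, note that $\tfrac1m\max_{i\in[m]}\text{dist}^2(x,X_i)\le\mathbb{E}[\text{dist}^2(x,X_i)]\le\max_{i\in[m]}\text{dist}^2(x,X_i)$ for all $x$, so \eqref{blr} is equivalent to: for every $r>0$ there is $\kappa>0$ with $\kappa\,\text{dist}(x,X)\le\max_{i\in[m]}\text{dist}(x,X_i)$ on $B(0;r)$, and likewise for \eqref{lr} on all of $\R^n$; we prove this form.

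Next, write $X_{r+1},\dots,X_m$ as solution sets of finitely many affine inequalities and set $X_P:=\bigcap_{i=r+1}^m X_i$. Hoffman's error bound for linear systems supplies a \emph{global} constant $c_P>0$ with $\text{dist}(x,X_P)\le c_P\max_{r<i\le m}\text{dist}(x,X_i)$ for all $x\in\R^n$, so the polyhedral part collapses, up to this constant, to the single closed convex set $X_P$. By \eqref{slater} the finite family $\{X_1,\dots,X_r,X_P\}$ has a point $\bar x$ with $\bar x\in\ri(X_i)$ for every $i\le r$ and $\bar x\in X_P$, and it remains to show this family is boundedly linearly regular.

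Suppose the max‑estimate failed on some $B(0;r)$; then there would exist $x_k\in B(0;r)$ with $\text{dist}(x_k,X)>0$ and $\max_i\text{dist}(x_k,X_i)/\text{dist}(x_k,X)\to0$. Since $\text{dist}(x_k,X)\le2r$, also $\max_i\text{dist}(x_k,X_i)\to0$, so any cluster point $\bar z$ of $(x_k)$ lies in every $X_i$, hence in $X$. Thus it suffices to prove a \emph{local} estimate: for each $\bar z\in X$ a neighbourhood $U_{\bar z}$ and $\kappa_{\bar z}>0$ with $\kappa_{\bar z}\text{dist}(x,X)\le\max_i\text{dist}(x,X_i)$ on $U_{\bar z}$; finitely many such neighbourhoods cover $B(0;r)\cap X$, and a routine compactness argument handles the portion of $B(0;r)$ bounded away from $X$. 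The local estimate is the heart of the proof and the place where non‑polyhedrality is felt: one shows that the family has the strong conical hull intersection property at $\bar z$, i.e.\ ${\cal N}_X(\bar z)=\sum_i{\cal N}_{X_i}(\bar z)$ (this is exactly what the relative‑interior form of Slater's condition buys, polyhedral members contributing their normal cone without any interiority requirement), and then that strong CHIP holding throughout $X$, together with a compactness argument on the relevant normal cones, yields the claimed local regularity constant. Concretely, one may project $x$ onto each $X_i$ and slide the projection a controlled fraction of the way towards $\bar x$ inside $\text{aff}(X_i)$ (using, for $i\le r$, that $B(\bar x,\rho_i)\cap\text{aff}(X_i)\subseteq X_i$ for some $\rho_i>0$, and for $X_P$ Hoffman's bound again) to produce a point of $\bigcap_iX_i$ at distance $O(\max_i\text{dist}(x,X_i))$ from $x$; see \cite{BauBor:99}, Corollaries~3 and~6.

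Finally, if $X$ is bounded, choose $r_0$ with $X\subset B(0;r_0)$ and apply the above on $B(0;2r_0)$. For $\|x\|\ge2r_0$ pass to the asymptotic picture: boundedness of $X$ forces $0^{+}X=\{0\}$, hence $\bigcap_i0^{+}X_i=0^{+}X=\{0\}$, and since the distance to a cone is positively homogeneous the bounded linear regularity of the cones $\{0^{+}X_i\}$ becomes a \emph{global} estimate for them; transporting it back along the projection $\Pi_X(x)$ gives $\kappa'\|x\|\le\max_i\text{dist}(x,X_i)$ for $\|x\|\ge2r_0$, and combining the two ranges yields global linear regularity \eqref{lr}. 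The main obstacle throughout is the local estimate above: for polyhedral sets Hoffman's lemma delivers it globally and unconditionally, whereas a genuinely curved set requires the relative‑interior hypothesis to convert ``$x$ is near every $X_i$'' into ``$x$ is near $\bigcap_iX_i$'' with a comparable constant, and without such a hypothesis (e.g.\ two externally tangent balls) the bound genuinely fails, only a H\"olderian one with exponent $1/2$ surviving. \hfill$\square$
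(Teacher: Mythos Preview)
The paper does not supply its own proof of this corollary; it simply cites Corollaries~3 and~6 of \cite{BauBor:99}. Your sketch follows exactly the architecture of that reference---Hoffman's bound to absorb the polyhedral sets into a single set $X_P$, the relative-interior hypothesis yielding strong CHIP, a local-to-global compactness reduction on bounded balls, and a recession-cone argument for the globally bounded case---and you cite it as well, so at the level of strategy there is nothing to contrast.

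Two steps in the sketch would need tightening for a self-contained argument. The ``slide towards $\bar x$'' construction, as you phrase it, produces for each $i$ a nearby point of $X_i$, but not a \emph{single} point of $\bigcap_i X_i$ at controlled distance from $x$; the argument in \cite{BauBor:99} proceeds instead through strong CHIP and a dual/separation estimate rather than a direct primal construction. In the bounded case, ``bounded linear regularity of the cones $\{0^+X_i\}$'' is not something you have established; what is actually used is only that $\bigcap_i 0^+X_i=\{0\}$, which by compactness of the unit sphere gives $\inf_{\|d\|=1}\max_i\text{dist}(d,0^+X_i)>0$, and then one shows $\text{dist}(x,X_i)/\|x\|$ is asymptotically bounded below by $\text{dist}(x/\|x\|,0^+X_i)$. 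The phrase ``transporting it back along $\Pi_X(x)$'' does not quite capture this. Both points are routine to fix and do not affect the overall correctness of the outline.
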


\begin{remark}
Note that  the linear regularity condition can be weaker than Slater's condition. For example, let us consider   the intersection of the epigraph of the function  $f_1(x)=\sqrt{x^2+x^4}$ and of the hypograph of the function $f_2(x)=-\sqrt{x^2+x^4}$. Then, it is easy to check that linear regularity holds, while Slater fails. 
\end{remark}	

\noindent Another important notion in optimization is the quadratic growth property of an objective function of an optimization problem, see \cite{NecNes:15} for a detailed exposition.  	
\begin{definition}[(Local) Quadratic Growth]
\label{def:1}
The  convex function $d: Y \to \rset^{}$, with $Y \subseteq \rset^m$, has the  local quadratic growth property  if  for any $y^0  \in \rset^m$ there exists $\sigma > 0$ such that:
\begin{align}
\label{lqg}
d(y) - d^* \geq \frac{\sigma}{2} \text{dist}^2(y,Y^*) \qquad \forall y  \in Y \cap \{y: d(y) \leq d(y^0) \},
\end{align}
where $d^*$ and $Y^*$ are the optimal value and the optimal set of $\min_{y \in Y} d(y)$, respectively.  When the previous inequality holds for any $y \in Y$ we say that the function $d$ has the quadratic growth property. 
\end{definition}

\noindent It is known that the class of functions satisfying the (local) quadratic growth is larger than the class of strongly convex functions. For example, any function of the form $d(y) = D(A^Ty)$, where $D$ is a strongly convex function and $A \not =0$ is any matrix of appropriate dimension, satisfies  the quadratic growth property \cite{NecNes:15}.  In particular, the dual of a primal problem with linear constraints, $\min_{x: Ax \leq b} g(x)$, satisfies the quadratic growth condition \eqref{lqg}, provided that $g$ is a smooth strongly convex function, see \cite{NecNed:14}.    Moreover, it has been shown recently that gradient-based algorithms, such as (projected) gradient method, restarted accelerated gradient method and their random coordinate descent counterparts   converge linearly on the class of  convex problems whose objective function is smooth and  satisfies the local quadratic growth property \cite{NecNes:15,NecCli:16,FerQu:18}.  These existing results motivate us to investigate further the properties of the dual function corresponding to the more general primal problem \eqref{bap-v}. In the next section we prove that  the dual function satisfies a  local quadratic growth property on any sublevel set, provided  that the objective function $g$ is smooth and strongly convex and the sets $X_i$'s satisfy the Slater's condition. 
	

\section{Dual formulation and properties}
\noindent In this section we take a close look at the primal convex problem \eqref{bap-v} and compute its dual form. Then, we analyze the main properties of the dual, in particular we prove a quadratic growth condition for the dual objective function.  Note  that the  convex optimization problem  \eqref{bap-v} can be equivalently written as:
\begin{align}
\label{model_indicator}
g^* = \min\limits_{x \in \rset^n} & \; g(x) + \sum\limits_{i=1}^m \mathbb{I}_{X_i}(x).
\end{align}
\noindent Further, by replicating the variable  $x$ in the model \eqref{model_indicator} we can obtain the following equivalent problem:
\begin{align}
\label{eq:extended}
\min\limits_{\textbf{x} \in \rset^{n(m+1)}} & \;  g(x) + \sum\limits_{i=1}^m \mathbb{I}_{X_i}(x_i) \\
\text{s.t.} & \;\; x = x_i \quad \forall i\in [m]. \nonumber
\end{align}
Since we want  to derive the  dual problem of \eqref{bap-v}, we form first  the Lagrangian function $\mathcal{L}: \rset^{n(m+1)} \times \rset^{mn} \mapsto \rset^{} \cup \{+\infty\}$ associated to the above problem and express the dual function as:
\begin{align*}
D(y) = \min\limits_{\textbf{x} \in \rset^{n(m+1)}} & \; \mathcal{L}(\textbf{x},y) \quad \left( : = g(x) + \sum\limits_{i=1}^m \mathbb{I}_{X_i}(x_i) + \sum\limits_{i=1}^m \langle y_i, x-x_i \rangle \right) \quad \forall y \in \rset^{mn},
\end{align*}	
where $\textbf{x} =(x; x_1;\cdots;x_m)$ and $y = (y_1;\cdots;y_m)$ (recall that for the simplicity of the notation we omit the transpose, i.e. instead of $y=(y_1^T \cdots y_M^T)^T$ we write $y = (y_1;\cdots;y_m)$).  Taking into account that the Fenchel  conjugate of the indicator function of a convex set $X$ is the support function, i.e.    $(\mathbb{I}_X)^*(\cdot) = \text{supp}_{X} (\cdot)$, where $\text{supp}_{X} (y) = \max_{x \in X} \langle x, y\rangle$, and denoting $d(y) = - D(y) $,  results into the following dual problem:
\begin{align}
\label{dbap}
d^* = \min\limits_{y \in \rset^{mn}} & \;  d(y) \;\;  \left( := \underbrace{ g^*\left(- \sum\limits_{i=1}^m y_i \right)}_{=\tilde{d}(y)}   + \underbrace{  \sum\limits_{i=1}^m \text{supp}_{X_i}(y_i)}_{=\text{supp}(y)} \right).
\end{align}
Recall that if $g:\rset^n \to \rset^{}$ has $L$-Lipschitz continuous  gradient, then its Fenchel conjugate $g^*$ is $1/L$-strongly convex function.  Similarly, if $g$ is $\sigma$-strongly convex, then its Fenchel conjugate $g^*$ has $1/\sigma$-Lipschitz continuous gradient, see e.g.,  \cite{RocWet:98}.


\subsection{  Zero duality gap for dual problem under linear regularity }
It is well-known that if  the  Slater's  condition \eqref{slater} holds for the collection of convex sets  $\{X_i\}_{i=1}^{m}$, then   the dual problem \eqref{dbap} is equivalent with the primal problem \eqref{bap-v}, i.e. strong duality holds \cite{RocWet:98}. More precisely, $g^*= - d^*$ and if we denote the optimal set of the dual problem  by $Y^* = \arg\min_y d(y)  \subseteq  \rset^{mn}$, then for any  dual  optimal solution $y^*=(y_1^*; \cdots; y_m^*) \in Y^* $ we can recover a primal optimal solution $x^*$ through the relation:
\begin{align}\label{pd-optcond}
  x^* = \nabla g^*\left(- \sum_{j=1}^M y_j^*\right).    
\end{align}

\noindent However, strong duality holds under the  more general linear regularity condition \eqref{lr}, as proved in the next theorem: 

\begin{theorem}
\label{th:sd}
For the  collection of sets  $\{X_i\}_{i=1}^{m}$ assume that their intersection is nonempty and  that they satisfy the  linear regularity condition \eqref{lr}. Then, strong duality holds, i.e. $d^* + g^* =0$. 
\end{theorem}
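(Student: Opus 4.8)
The plan is to construct an explicit dual optimal point directly from the (unique) primal optimal point, and to use the linear regularity assumption \eqref{lr} only in order to decompose the normal cone of the intersection $X$ into the sum of the normal cones of the individual sets $X_i$. First I would observe that, since $X$ is nonempty, closed and convex and $g$ is $1$-strongly convex (hence coercive) and differentiable, the primal problem \eqref{bap-v} has a unique minimizer $x^*$ with optimal value $g^* = g(x^*)$; moreover, writing $\sum_{i=1}^m \mathbb{I}_{X_i} = \mathbb{I}_X$ and applying the Moreau--Rockafellar sum rule (no constraint qualification is needed because $g$ is finite-valued), the first order optimality condition at $x^*$ reads $-\nabla g(x^*) \in {\cal N}_X(x^*)$.

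\noindent Next — and this is the crux — I would invoke that linear regularity \eqref{lr} implies bounded linear regularity, which by the results of \cite{BauBor:99} yields the strong conical hull intersection property of $\{X_i\}_{i=1}^m$ at $x^*$, i.e.\ ${\cal N}_X(x^*) = \sum_{i=1}^m {\cal N}_{X_i}(x^*)$. This produces vectors $y_i^* \in {\cal N}_{X_i}(x^*)$, $i\in[m]$, with $\sum_{i=1}^m y_i^* = -\nabla g(x^*)$; I would then set $y^* = (y_1^*;\cdots;y_m^*) \in \rset^{mn}$ and check that $d(y^*) = -g^*$. The verification is short: since $y_i^* \in {\cal N}_{X_i}(x^*)$ and $x^* \in X_i$ we have $\langle y_i^*, z\rangle \le \langle y_i^*, x^*\rangle$ for all $z\in X_i$, hence $\text{supp}_{X_i}(y_i^*) = \langle y_i^*, x^*\rangle$ and $\text{supp}(y^*) = \langle -\nabla g(x^*), x^*\rangle$; on the other hand $-\sum_i y_i^* = \nabla g(x^*)$, so the Fenchel--Young equality gives $\tilde d(y^*) = g^*(\nabla g(x^*)) = \langle x^*, \nabla g(x^*)\rangle - g(x^*)$; adding the two contributions yields $d(y^*) = \tilde d(y^*) + \text{supp}(y^*) = -g(x^*) = -g^*$. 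Finally, the weak duality bound $d(y) \ge -g(x)$, valid for every $y \in \rset^{mn}$ and every $x \in X$ (immediate from $g^*(-\sum_i y_i) \ge \langle x, -\sum_i y_i\rangle - g(x)$ together with $\text{supp}_{X_i}(y_i) \ge \langle y_i, x\rangle$), evaluated at $x = x^*$ gives $d(y) \ge -g^*$ for all $y$; combining, $d^* = d(y^*) = -g^*$, that is $d^* + g^* = 0$, and in addition $y^*$ is a minimizer of the dual problem \eqref{dbap}.

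\noindent I expect the only genuine obstacle to be the second step, namely turning the quantitative error bound \eqref{lr} into the exact normal-cone identity ${\cal N}_X(x^*) = \sum_i {\cal N}_{X_i}(x^*)$. This is precisely where the generality of the hypotheses matters: for possibly unbounded and non-polyhedral sets, in the absence of any Slater/relative-interior condition, one cannot appeal to the classical Fenchel--Rockafellar strong duality theorem (which would require $\bigcap_i \ri(X_i) \neq \emptyset$), and strong CHIP — guaranteed here by linear regularity — is exactly the substitute that makes the argument go through. Everything after that is a routine computation with conjugate and support functions.
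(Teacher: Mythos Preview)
Your proof is correct and follows essentially the same route as the paper: both arguments use the linear regularity assumption solely to obtain the normal-cone decomposition ${\cal N}_X(x^*) = \sum_i {\cal N}_{X_i}(x^*)$, then pick $y_i^* \in {\cal N}_{X_i}(x^*)$ summing to $-\nabla g(x^*)$ and compute $d(y^*) = -g(x^*)$ directly via Fenchel--Young and the identity $\text{supp}_{X_i}(y_i^*) = \langle y_i^*, x^*\rangle$. The only cosmetic differences are that the paper phrases dual optimality through the condition $\nabla g^*(-\sum_i y_i^*) \in {\cal N}_{X_i}^{-1}(y_i^*)$ rather than via an explicit weak-duality bound, and that you are more explicit than the paper about attributing the step ``linear regularity $\Rightarrow$ strong CHIP'' to \cite{BauBor:99}.
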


\begin{proof}
\noindent First, we observe that if  intersection is nonempty and  the linear regularity \eqref{lr} holds, then for any  $x \in X = \cap_{i=1}^m  X_i$ we have:
\[  {\cal N}_{ \cap_{i=1}^m  X_i}(x) = \textbf{+}_{i=1}^m   {\cal N}_{ X_i}(x), \]
where ${\cal N}_{X}(x)$ denotes the normal cone of the closed convex set $X$ at $x$. Hence,  $x$ is an optimal solution of the convex problem  \eqref{model_indicator} if and only if it satisfies:
\begin{align}
\label{eq:optcondpr}
0 \in \nabla g(x) + \left(\textbf{+}_{i=1}^m {\cal N}_{X_i}(x) \right). 
\end{align}
 Moreover, $y=(y_1;\cdots;y_m)$ is an optimal solution for the dual problem \eqref{dbap}  if and only if it satisfies:
\begin{align}
\label{eq:optconddu}
 0 \in -\nabla g^*\left(-\sum_{i=1}^m y_i\right) + {\cal N}_{X_i}^{-1}(y_i) \qquad \forall i. 
\end{align}
Now, if $x$ is an optimum for \eqref{model_indicator} , then from \eqref{eq:optcondpr} it follows that there are $y_i \in  {\cal N}_{X_i}(x)$ such that $\nabla g(x) = -\sum_{i=1}^m y_i$, or equivalently $x = \nabla g^*\left(-\sum_{i=1}^m y_i\right)$. Hence, for all $i$ we have $y_i \in  {\cal N}_{X_i}(\nabla g^*\left(-\sum_{i=1}^m y_i\right))$, or equivalently $\nabla g^*\left(-\sum_{i=1}^m y_i\right) \in {\cal N}_{X_i}^{-1}(y_i) $ for all $i$. Then,    $y=(y_1;\cdots;y_m)$ defined above satisfies \eqref{eq:optconddu} and thus optimal for the dual problem \eqref{dbap}.  Finally, let us note that there is no duality gap, since we have:
\begin{align*}
d^* & = g^*\left(-\sum_{i=1}^m y_i \right)  + \sum_{i} \text{supp}_{X_i}(y_i)  = -g(x) + \langle \nabla g(x), x\rangle  + \sum_{i=1}^m \langle y_i, x\rangle \\
& = - g(x) = - g^*.
\end{align*}  
\hfill$\square$
\end{proof}


\subsection{Regularity of dual function under Slater's condition}
 Recall that $Y^*$ denotes the set of dual solutions  of \eqref{dbap}.   In the sequel we  prove that the dual function satisfies a quadratic growth condition on any sublevel set, i.e., for any $y^0$ there exists $\sigma >0$ such that:
\[ d(y) - d^* \geq \frac{\sigma}{2} \dist^2(y, Y^*), \quad \forall y: d(y) \leq d(y^0). \]
To some extent, our proof of this fact is based on similar ideas as in \cite{GidPed:18}.  More precisely, instead of  our formulation \eqref{eq:extended}, the following problem is considered in \cite{GidPed:18}:
\[  \min_{\hat{\textbf{x}}=(x_1;\cdots;x_m)}  g(x_1,\cdots,x_m)  + \sum\limits_{i=1}^m \mathbb{I}_{X_i}(x_i) \quad  \text{s.t.}  \;\; M \hat{ \textbf{x}}=0, \]
where $g$ is assumed smooth and strongly convex, $M$ is any matrix, and $X_i$'s are all non-polyhedral compact sets satisfying Slater's condition.  Moreover, the assumption of bounded sets $X_i$ is crucial in the proofs of \cite{GidPed:18}. Under these settings,  \cite{GidPed:18} proves that the corresponding dual satisfies a quadratic growth condition on a ball around $Y^*$, i.e.,  for all $y$ satisfying  $\text{dist}(y,Y^*) \leq R$ for some appropriate $R$.  Our results are stronger, since we remove the assumption of bounded sets (thus we can also consider polyhedral sets in the intersection) and we prove that the quadratic growth holds on any sublevel  set instead of a ball around $Y^*$.  Note that  in our case the matrix $M$ of  \eqref{eq:extended} has the  form:
$$M =
\begin{bmatrix}
-I_n & I_n & 0 & \cdots & 0 \\
-I_n & 0 & I_n & \cdots & 0 \\
\cdots   & \cdots  & \cdots  & \cdots & \cdots \\
-I_n & 0 & 0 & \cdots & I_n \\
\end{bmatrix} \in \rset^{nm\times n(m+1)}$$ and the dual  function can be written as 
\begin{equation}
d(y) = g_{\textbf{X}}^*(-M^T y), \;\; \text{where} \;\;  g_{\textbf{X}}^*(z) = \sup_{\textbf{x}} \left(\langle z, \textbf{x} \rangle - g(x) -  \sum\limits_{i=1}^m \mathbb{I}_{X_i}(x_i)\right), 
\end{equation}
where $\textbf{X} = \rset^n \times X_1 \times \ldots \times X_m$ and recall that $\textbf{x} =(x; x_1;\cdots;x_m)$ and $y = (y_1;\cdots;y_m)$. First, we show the following lower bound on  $d(y) - d^*$:

\begin{lemma}
\label{lem:lower_bound_on_D}
Assume that $g$ is   $1$-strongly convex  function and with $L$-Lipschitz continuous gradient and $X_i$'s are general convex sets. Then, for any constant $R>0$ we have for all  $y$ satisfying  $\dist(y, Y^*) \leq R$ the inequality:  
\[d(y) - d^* \geq 
 \min\left(\frac{\sup_{u \in  \textbf{X} \cap \textbf{B}} -  \langle y - y^*, M u \rangle}{2}, \frac{(\sup_{u \in {\textbf{X}} \cap \textbf{B}} -  \langle y - y^*, M u \rangle)^2}{2 L D_R^2}\right), 
 \]
where $\textbf{B}$ denotes the ball with the center  $\textbf{x}^*=(x^*;\cdots;x^*) \in \textbf{X}$ and the radius $D_R = 2  \|M\| R/L$ and $x^*$ is the unique optimal solution of \eqref{bap-v}.
\end{lemma}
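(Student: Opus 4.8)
The plan is to exploit strong convexity of $g$ in the form of strong convexity of the ``primal-side'' function and then transfer it to a lower bound on the dual gap. Write $d(y) = g_{\textbf{X}}^*(-M^T y)$. Since $g$ is $1$-strongly convex, the function $\textbf{x}\mapsto g(x) + \sum_i \mathbb{I}_{X_i}(x_i)$ is $1$-strongly convex in the $x$-block (it is only convex, not strongly convex, in the $x_i$-blocks), so its conjugate $g_{\textbf{X}}^*$ has a gradient that is Lipschitz only in a partial sense, while $L$-Lipschitz continuity of $\nabla g$ makes $g_{\textbf{X}}^*$ strongly convex in the direction of $z=-M^T y$ restricted appropriately. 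Concretely, I would first record the two standard conjugacy facts already quoted in the text: $g^*$ is $1/L$-strongly convex and $\nabla g^*$ exists and equals the primal recovery map. Then fix $y$ with $\dist(y,Y^*)\le R$ and pick $y^*\in Y^*$ attaining the distance, so $\|y-y^*\|\le R$.

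Next I would use the variational (sup) definition of $d$: for any admissible $\textbf{u}=(u;u_1;\dots;u_m)\in\textbf{X}$,
\[
d(y) = \sup_{\textbf{v}\in\textbf{X}} \big(\langle -M^T y,\textbf{v}\rangle - g(v) - \textstyle\sum_i \mathbb{I}_{X_i}(v_i)\big) \ge \langle -M^T y, \textbf{u}\rangle - g(u),
\]
and similarly $d^*=d(y^*) = \langle -M^T y^*, \textbf{x}^\star\rangle - g(x^*)$ where $\textbf{x}^\star=(x^*;\dots;x^*)$ is the (replicated) primal optimum, which satisfies $M\textbf{x}^\star=0$. Subtracting, and choosing $\textbf{u}$ of the form $\textbf{x}^\star + t(\textbf{u}-\textbf{x}^\star)$ for a point $\textbf{u}\in\textbf{X}$ near $\textbf{x}^\star$ and a stepsize $t\in[0,1]$ to be optimized, I get
\[
d(y)-d^* \;\ge\; -t\,\langle M^T y,\, \textbf{u}-\textbf{x}^\star\rangle + \langle M^T y^*, \textbf{x}^\star\rangle - \langle M^T y^*,\textbf{x}^\star\rangle \;-\; \big(g(x^* + t(u-x^*)) - g(x^*)\big).
\]
Using $M\textbf{x}^\star=0$, the linear term collapses to $-t\langle y-y^*,\, M\textbf{u}\rangle$ (here I use $\langle M^T y^*,\textbf{u}-\textbf{x}^\star\rangle=\langle y^*, M\textbf{u}\rangle$ and the dual optimality/complementarity $\langle y^*, M\textbf{u}\rangle\le 0$ for $\textbf{u}\in\textbf{X}$, which kills the $y^*$ contribution favourably), while $L$-smoothness of $g$ bounds $g(x^*+t(u-x^*))-g(x^*)\le t\langle\nabla g(x^*), u-x^*\rangle + \tfrac{Lt^2}{2}\|u-x^*\|^2$, and the first-order term $\langle\nabla g(x^*), u-x^*\rangle\ge 0$ by primal optimality of $x^*$ over $X$ (note $u\in X$ whenever $\textbf{u}\in\textbf{X}$, since then $u\in\bigcap X_i$). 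Restricting $\textbf{u}$ to the ball $\textbf{B}$ of radius $D_R=2\|M\|R/L$ centered at $\textbf{x}^\star$ controls $\|u-x^*\|\le D_R$, so
\[
d(y)-d^* \;\ge\; \sup_{\textbf{u}\in\textbf{X}\cap\textbf{B}} \Big( -t\,\langle y-y^*, M\textbf{u}\rangle - \tfrac{Lt^2}{2} D_R^2 \Big).
\]
Finally, optimizing the scalar quadratic $t\mapsto t a - \tfrac{L D_R^2}{2}t^2$ over $t\in[0,1]$, with $a=\sup_{\textbf{u}\in\textbf{X}\cap\textbf{B}}-\langle y-y^*,M\textbf{u}\rangle\ge 0$, yields $t^\star=\min(1, a/(LD_R^2))$ and hence value $\min(a/2,\ a^2/(2LD_R^2))$, which is exactly the claimed bound.

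The main obstacle I anticipate is the bookkeeping around the $y^*$-terms: showing cleanly that the contribution of $y^*$ can be dropped (or bounded in the right direction) requires invoking complementary slackness $\langle y_i^*, u_i - x^*\rangle\le 0$ for $u_i\in X_i$ — equivalently $y_i^*\in\mathcal N_{X_i}(x^*)$, from the dual optimality conditions \eqref{eq:optconddu} and the recovery relation \eqref{pd-optcond} — together with $M\textbf{x}^\star=0$; getting every sign right through the $M^T$ transpositions is the delicate part. A secondary point is justifying that the constrained sup over $\textbf{X}\cap\textbf{B}$ is the correct localization, i.e., that restricting the feasible competitor $\textbf{u}$ to $\textbf{B}$ does not lose the relevant directions — this is where the specific radius $D_R=2\|M\|R/L$ is engineered so that the later steps (in the subsequent lemmas, which will lower-bound the sup $a$ itself using linear regularity) go through. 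The quadratic-growth conclusion itself will then follow by lower-bounding $a$ in terms of $\dist(y,Y^*)$, but that is deferred to the results after this lemma.
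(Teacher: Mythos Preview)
Your approach has a genuine gap that stems from a misidentification of the set $\textbf{X}$. Recall that $\textbf{X} = \rset^n \times X_1 \times \cdots \times X_m$, so for $\textbf{u}=(u;u_1;\dots;u_m)\in\textbf{X}$ the first block $u$ lies in all of $\rset^n$, \emph{not} in $\bigcap_i X_i$. Your parenthetical ``note $u\in X$ whenever $\textbf{u}\in\textbf{X}$'' is therefore false, and with it both of your sign claims collapse. Indeed, using $M\textbf{x}^*=0$ and $\nabla g(x^*)=-\sum_i y_i^*$ one computes
\[
\langle y^*, M\textbf{u}\rangle \;=\; \sum_i\langle y_i^*, u_i-u\rangle \;=\; \langle\nabla g(x^*),\,u-x^*\rangle \;+\; \sum_i\langle y_i^*, u_i-x^*\rangle .
\]
The second sum is $\le 0$ by $y_i^*\in\mathcal N_{X_i}(x^*)$, but the first term has no definite sign because $u\in\rset^n$ is unconstrained. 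Hence ``$\langle y^*, M\textbf{u}\rangle\le 0$ for $\textbf{u}\in\textbf{X}$'' fails in general. Likewise, since $u\notin X$ in general, primal optimality gives you no control over $\langle\nabla g(x^*), u-x^*\rangle$; and even if it did, the inequality $\ge 0$ would go the wrong way (you would be \emph{subtracting} a nonnegative quantity, so you cannot drop it from a lower bound). Tracing your computation, the residual you must control is
\[
-t\Big[\,2\langle\nabla g(x^*),u-x^*\rangle+\sum_i\langle y_i^*,u_i-x^*\rangle\,\Big]-\tfrac{Lt^2}{2}\|u-x^*\|^2,
\]
and this has no sign without further structure.

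The paper avoids this difficulty by not trying to isolate the first block at all. Instead of bounding $g(w_0)-g(x^*)$ via $L$-smoothness in the $x$-variable only (which produces merely $\tfrac{L}{2}\|u_0-x^*\|^2$ and leaves the above residual), it upper bounds the full primal Bregman function $\ell_{\textbf{x}^*}(\cdot)$ by $h_{\textbf{x}^*}(\cdot)=\tfrac{L}{2}\|\cdot\|^2+\mathbb{I}_{\textbf{X}}(\cdot+\textbf{x}^*)$ and then passes to conjugates, obtaining directly
\[
d(y)-d^* \;=\; \ell_{\textbf{x}^*}^*(-M^T(y-y^*)) \;\ge\; \sup_{u\in\textbf{X}}\Big(-\tfrac{L}{2}\|u-\textbf{x}^*\|^2-\langle y-y^*, Mu\rangle\Big).
\]
The point is that the quadratic penalty here is over the \emph{entire} vector $u-\textbf{x}^*\in\rset^{n(m+1)}$, not just its first block; this is precisely what absorbs the normal-cone contributions from the $x_i$-blocks that your approach leaves dangling. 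Only after this does the paper localize to the ball $\textbf{B}$ (by showing the sup is attained there) and then perform the same scalar optimization in $\gamma\in[0,1]$ that you do in $t$. If you wish to salvage the variational route, you would need to accept the coarser penalty $\tfrac{L}{2}\|\textbf{u}-\textbf{x}^*\|^2$ from the outset rather than $\tfrac{L}{2}\|u_0-x^*\|^2$; your current derivation does not produce it.
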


\begin{proof}
Let $ \textbf{x}  \in  \textbf{X} $ and define $g_{\textbf{X}} (\textbf{x}) = g(x) + \sum\limits_{i=1}^m \mathbb{I}_{X_i}(x_i)$.  Further, let us consider $t \in \partial g_{\textbf{X}} (\textbf{x}) = \nabla g(x) + {\cal N}_{\textbf{X}} (\textbf{x})$ and  define the function:
\[ \ell_{\textbf{x}}(u) = g_{\textbf{X}}(u+\textbf{x}) - g_{\textbf{X}}(\textbf{x}) - \langle u, t \rangle. \]
Note that  $\ell_{\textbf{x}}$ has a  minimum at $u=0$ with the optimal value $0$. Since $g$ is $L$-smooth, then we have:
\[ \ell_{\textbf{x}}(u) \leq h_{\textbf{x}}(u) \;\; \left(:= \frac{L}{2} \|u\|^2 + I_{\textbf{X}}(u+\textbf{x}) \right) \quad \forall u. \]
This implies that their Fenchel conjugates satisfy \cite{RocWet:98}: 
\[
\ell_{\textbf{x}}^*(v) \geq h_{\textbf{x}}^*(v) \quad  \forall v.
\]

\noindent Now, by noticing that  $\ell_{\textbf{x}}^*(v)   = g_{\textbf{X}} (v+t) - g_{\textbf{X}} (t) - \langle \textbf{x}, v \rangle$ and denoting $y^* = \Pi_{Y_*}(y)$, we have $-M^T y^* \in \partial g_{\textbf{X}} (\textbf{x}^*)$ and  $M\textbf{x}^*= 0$.   Hence, for any dual variable $y$ and for the unique optimal  $\textbf{x}^*$ we get:
\[
d(y) - d^* = g_{\textbf{X}}^*  (-M^T y) - g_{\textbf{X}}^*(-M^T y^*) = \ell_{\textbf{x}^*}^*(-M^\top(y - y^*)) \geq  h_{\textbf{x}^*}^* (-M^\top(y - y^*)).
\]
We also have:
\[
h_{\textbf{x}^*}^*(v) = \sup_{u} \left( \langle v, u \rangle - \frac L2 \|u\|^2 - I_{\textbf{X}}(u+\textbf{x}^*)\right) = \sup_{u \in \textbf{X}} \left(  -\frac L2 \|u - \textbf{x}^*\|^2 + \langle v, u - \textbf{x}^*\rangle \right).
\]
Hence, we obtain: 
\begin{align*}
d(y) - d^* & \geq \sup_{u \in \textbf{X}} \left( -\frac L2 \|u - \textbf{x}^*\|^2 + \langle -M^T (y - y^*), u - \textbf{x}^*\rangle \right) \\
& =  \sup_{u \in \textbf{X}}  \left( -\frac L2 \|u - \textbf{x}^*\|^2 - \langle y - y^*, M u \rangle \right).
\end{align*}
Note  that   $\sup_{u \in \textbf{X}} - \langle y - y^*, M u \rangle > 0$ for all $y \not \in Y^*$. Indeed, if $y$ is not a dual optimal point, then $-M^\top y \not \in \partial g_{\textbf{X}} (\textbf{x}^*)$. On the other hand,  $-M^T y^* \in \partial g_{\textbf{X}} (\textbf{x}^*)$ and  thus   there exists  $ u \in  \textbf{X}$ such that:
\begin{equation}
\label{eq:positive_for_any_y}
0 < \langle M^\top y^* - M^\top y , u - \textbf{x}^* \rangle = -\langle y - y^*, M u \rangle.
\end{equation}
As the linear term in  the expression $-\frac L2 \|u - \textbf{x}^*\|^2 - \langle y - y^*, M u \rangle$ can be made positive, choosing $u$ sufficiently close to $\textbf{x}^*$ shows that the  bound 
$$\sup_{u \in \textbf{X}}  \left( -\frac L2 \|u - \textbf{x}^*\|^2 - \langle y - y^*, M u \rangle \right)$$  is positive. 
Notice that since for any $u$ satisfying  $ -\frac L2 \|u - \textbf{x}^*\|^2 - \langle y - y^*, M u \rangle \geq 0$, we have: 
\begin{align*}
\frac L2 \|u - \textbf{x}^*\|^2 & \leq - \langle y - y^*, M u \rangle = - \langle M^\top (y - y^*), u - \textbf{x}^* \rangle\\
& \le \| M^\top (y - y^*)\|\; \|u - \textbf{x}^* \|.
\end{align*}
Using now the assumption on the distance from $y$ to $Y_*$, the last inequality implies $ \|u - \textbf{x}^*\| \leq \frac 2L \| M \| R$, which  shows that the supremum in the lower bound cannot be reached outside the ball  $\textbf{B}$ with the center  $\textbf{x}^*=(x^*;\cdots;x^*)$ and the radius $D_R = 2  \|M\| R/L$. Therefore, we get:
\begin{align*}
d(y) - d^* & \geq \sup_{u \in \textbf{X}}  \left( -\frac L2 \|u - \textbf{x}^*\|^2 - \langle y - y^*, M u \rangle \right) \\ 
& = \sup_{u \in \textbf{X} \cap \textbf{B}}  \left( -\frac L2 \|u - \textbf{x}^*\|^2 - \langle y - y^*, M u \rangle \right).
\end{align*}
Now, using the  change of variable $u' = (1-\gamma)x^* + \gamma u$, we further have: 
\begin{align*}
d(y) - d^* & \geq \sup_{u \in  \textbf{X} \cap \textbf{B}} \sup_{\gamma \in [0,1]}  \left( -\frac {\gamma^2L}{2} \|u - \textbf{x}^*\|^2 - \gamma \langle y - y^*, M u \rangle \right) \\ 
& \geq  \sup_{\gamma \in [0,1]} \sup_{u \in  \textbf{X} \cap \textbf{B}}  \left( -\frac {\gamma^2L}{2} D_R^2 - \gamma \langle y - y^*, M u \rangle \right) \\
& = \sup_{\gamma \in [0,1]} \left( -\frac {\gamma^2L}{2} D_R^2 + \gamma \sup_{u \in  \textbf{X} \cap \textbf{B}} - \langle y - y^*, M u \rangle  \right) \\
&= \min\left(\frac{\sup_{u \in  \textbf{X} \cap \textbf{B} }-  \langle y - y^*, M u \rangle}{2}, \frac{(\sup_{u \in  \textbf{X} \cap \textbf{B}}-  \langle y - y^*, M u \rangle)^2}{2LD_R^2}\right),
\end{align*}
which confirms our statement. \hfill$\square$
\end{proof}

\noindent Another  key result in our analysis is the following:

\begin{lemma}
\label{lem:structure_Y_star}
Assume that the sets $X_i$'s satisfy Slater's condition,  that is there exists $1 \leq r \le m$ such that the sets $X_{r+1}, \cdots, X_m $ are polyhedral and there exists $\bar x \in 
\left(\bigcap_{i=1}^r \ri(X_i) \right) \bigcap \left( \bigcap_{i=r+1}^m X_i \right) $. Moreover, assume that  $g$ is 1-strongly convex and with $L$-Lipschitz continuous gradient. Then, the dual optimal set $Y^*$ of \eqref{dbap} can be written as:
\[ Y^* = V + \mathcal K,  \]
where $V = \cap_{i=0}^m \big(M_{i,:}(\mathrm{Span}(X_i - \bar x))\big)^{\perp}$ and $\mathcal K$ is a compact set (we use $M_{i,:}$ to denote the appropriate block  column submatrix  of the matrix $M$ and recall that $X_0 = \rset^n$).
\end{lemma}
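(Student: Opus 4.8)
The plan is to read off an explicit description of $Y^{*}$ from the optimality conditions, to recognise $V$ as the lineality space of $Y^{*}$, and then to reduce the claim to the statement that $Y^{*}$ has no recession direction outside $V$. First I would make $Y^{*}$ explicit: Slater's condition implies linear regularity (Corollary~\ref{th:slater-lr}), hence strong duality (Theorem~\ref{th:sd}), and combining the dual optimality condition \eqref{eq:optconddu} with the primal--dual relation \eqref{pd-optcond} and the uniqueness of the primal minimiser $x^{*}$ (strong convexity of $g$) yields
\[
  Y^{*}=\Big\{\,y=(y_{1};\cdots;y_{m})\ :\ \textstyle\sum_{i=1}^{m}y_{i}=-\nabla g(x^{*}),\ \ y_{i}\in\mathcal{N}_{X_{i}}(x^{*})\ \forall i\,\Big\},
\]
so $Y^{*}$ is the affine subspace $\{y:\sum_{i}y_{i}=-\nabla g(x^{*})\}$ intersected with the product of the closed convex cones $\mathcal{N}_{X_{i}}(x^{*})$; in particular $\sum_{i}y_{i}$ is the same for every dual optimum.

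Next, writing $L_{i}=\mathrm{Span}(X_{i}-\bar x)$ and unwinding the block structure of $M$ (recall $X_{0}=\mathbb{R}^{n}$), one gets $V=\{y:\sum_{i}y_{i}=0,\ y_{i}\in L_{i}^{\perp}\ \forall i\}$, which is a subspace. Since $x^{*}\in X_{i}\subseteq\mathrm{aff}(X_{i})=\bar x+L_{i}$, each $L_{i}^{\perp}$ is contained in $\mathcal{N}_{X_{i}}(x^{*})$, and because normal cones are convex cones, adding any element of $V$ to a point of $Y^{*}$ keeps it in $Y^{*}$; hence $Y^{*}+V=Y^{*}$, and since $V$ is a subspace this gives $Y^{*}=(Y^{*}\cap V^{\perp})+V$. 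So it suffices to set $\mathcal{K}:=Y^{*}\cap V^{\perp}$ (closed and convex) and prove that $\mathcal{K}$ is bounded — equivalently, that $\operatorname{rec}(Y^{*})\cap V^{\perp}=\{0\}$, equivalently (using $V\subseteq\operatorname{rec}(Y^{*})$ and the fact that $\operatorname{rec}(Y^{*})$ is a convex cone) that $\operatorname{rec}(Y^{*})=V$.

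Since $\operatorname{rec}(Y^{*})=\{y:\sum_{i}y_{i}=0,\ y_{i}\in\mathcal{N}_{X_{i}}(x^{*})\}$ and the inclusion $V\subseteq\operatorname{rec}(Y^{*})$ is already in hand, the remaining task is $\operatorname{rec}(Y^{*})\subseteq V$. For $y$ in this recession cone I would use that the Slater point $\bar x$ lies in $X=\bigcap_{j}X_{j}$, hence in every $X_{i}$, so $\langle y_{i},\bar x-x^{*}\rangle\le0$ for all $i$; summing and using $\sum_{i}y_{i}=0$ forces each $\langle y_{i},\bar x-x^{*}\rangle=0$, i.e.\ $y_{i}\in\mathcal{N}_{X_{i}}(\bar x)$. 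For the possibly non-polyhedral blocks $i\le r$ we have $\bar x\in\ri(X_{i})$, hence $\mathcal{N}_{X_{i}}(\bar x)=L_{i}^{\perp}$ and $y_{i}\in L_{i}^{\perp}$, as required; once this is known for all $i$ we get $y\in V$, so $\operatorname{rec}(\mathcal{K})=V\cap V^{\perp}=\{0\}$, $\mathcal{K}$ is compact, and $Y^{*}=V+\mathcal{K}$. The step I expect to be the main obstacle is the polyhedral blocks $i>r$: there the Slater point need not lie in the relative interior of $X_{i}$, so $\mathcal{N}_{X_{i}}(\bar x)$ is in general strictly larger than $L_{i}^{\perp}$ and the convex-geometry argument above no longer closes; to handle them I would exploit polyhedrality of $X_{r+1},\dots,X_{m}$ directly — this is presumably where the bounded linear regularity guaranteed by Corollary~\ref{th:slater-lr} enters — using that the cones $\mathcal{N}_{X_{i}}(x^{*})$ are then finitely generated, together with the constraint $\sum_{i}y_{i}=0$ and the relations derived above, to show that these blocks too contribute only directions lying in $V$.
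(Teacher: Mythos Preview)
Your recession-cone route is different from the paper's, which works directly: it decomposes each $y^{*}\in Y^{*}$ as $y^{*}_{V}+y^{*}_{V^{\perp}}$, rewrites optimality as $-\langle y^{*}_{V^{\perp}},M(\textbf{x}-\bar{\textbf{x}})\rangle\le\langle\nabla g(x^{*}),x-x^{*}\rangle$ for all $\textbf{x}\in\textbf{X}$, and then claims that $0$ lies in the relative interior of (a modification of) $M(\textbf{X}-\bar{\textbf{x}})$, so that for some fixed $\delta>0$ one can choose $\textbf{x}$ with $M(\textbf{x}-\bar{\textbf{x}})=-\delta\,y^{*}_{V^{\perp}}/\|y^{*}_{V^{\perp}}\|$ and read off a bound on $\|y^{*}_{V^{\perp}}\|$. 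Your decomposition $Y^{*}=(Y^{*}\cap V^{\perp})+V$ is cleaner, but both arguments reduce to the same thing --- boundedness of $Y^{*}\cap V^{\perp}$ --- and both stall at exactly the polyhedral blocks.

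The obstacle you isolate is genuine and cannot be removed along the lines you propose, because the statement is false as written. Take $n=2$, $m=3$, $r=1$, $X_{1}$ the closed unit ball, $X_{2}=\{x:x_{1}\le 0\}$, $X_{3}=\{x:x_{1}\ge 0\}$, $\bar x=(0,0)$ and $g(x)=\tfrac12\|x-(2,0)\|^{2}$. All hypotheses hold; $x^{*}=(0,0)$, each $L_{i}=\mathbb{R}^{2}$, so $V=\{0\}$. Yet $\mathcal{N}_{X_{1}}(x^{*})=\{0\}$, $\mathcal{N}_{X_{2}}(x^{*})=\mathbb{R}_{+}(1,0)$, $\mathcal{N}_{X_{3}}(x^{*})=\mathbb{R}_{+}(-1,0)$, hence $Y^{*}=\{(\,0,\,(2+\beta,0),\,(-\beta,0)\,):\beta\ge 0\}$ is a half-line, and no decomposition $Y^{*}=V+\mathcal{K}$ with $\mathcal{K}$ compact exists. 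Your transfer $y_{i}\in\mathcal{N}_{X_{i}}(x^{*})\Rightarrow y_{i}\in\mathcal{N}_{X_{i}}(\bar x)$ is correct, but for $i>r$ the cone $\mathcal{N}_{X_{i}}(\bar x)$ is here strictly larger than $L_{i}^{\perp}$, and neither finite generation nor bounded linear regularity forces those components into $L_{i}^{\perp}$. The paper's proof breaks at the parallel step: its identity $\ri\big(\bigcap_{i>r}(X_{i}\cap\hat{\cal X})\big)=\bigcap_{i>r}\ri(X_{i}\cap\hat{\cal X})$ fails in this example (where $\hat{\cal X}=\mathbb{R}^{2}$ and $\ri(X_{2})\cap\ri(X_{3})=\emptyset$), so the claimed $\delta>0$ does not exist.
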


\begin{proof}
Let us decompose any dual optimum as $y^* = y^*_V + y^*_{V^\perp} \in Y^*$, where $y^*_V \in V$ and $y^*_{V^\perp} \in V^\perp = \oplus_{i=0}^m \big(M_{i,:}(\mathrm{Span}(X_i - \bar x))\big)$. We also denote $\bar{\textbf{x}} = (\bar x;\cdots;\bar x)$, the matrix $U=(I_n; \textbf{0})$  and $x^*$ the unique primal solution. Using the optimality condition $M^T y^* + U \nabla g(x^*) \in {\cal N}_{\textbf{X}}(\textbf{x}^*)$,   it follows that $y^* \in Y^*$ if and only if:
\[ -\langle y^*_{V^\perp}, M (\textbf{x} - \textbf{x}^*) \rangle \leq \langle \nabla g(x^*), x - x^*\rangle \quad \forall \textbf{x} \in \textbf{X},   \]
or, since $M \textbf{x}^* = M  \bar{\textbf{x}} =0$, equivalently: 
\[ -\langle y^*_{V^\perp}, M (\textbf{x} - \bar{\textbf{x}}) \rangle \leq \langle \nabla g(x^*), x - x^*\rangle \quad \forall \textbf{x} \in \textbf{X}.   \]
Let us denote the affine hull of $\cap_{i=1}^r X_i$ by  $\hat{\cal X} = \mathrm{Span}(\cap_{i=1}^r X_i)$. Now, using similar arguments as in the proof of Theorem 20.1 \cite{RocWet:98}, since
\begin{align*}
\bar x \in  \left(\bigcap_{i=1}^r \ri(X_i) \right) \bigcap \left( \bigcap_{i=r+1}^m X_i \right)
\end{align*}
and  $X_{r+1}, \cdots, X_m $ are polyhedral sets with $1 \leq r \leq m$,  we have
\begin{align*}
\bar x & \in   \left(\bigcap_{i=1}^r \ri(X_i) \right) \bigcap \ri\left( \bigcap_{i=r+1}^m (X_i \cap \hat{\cal X})\right) =    \left(\bigcap_{i=1}^r \ri(X_i) \right) \bigcap \left( \bigcap_{i=r+1}^m \ri(X_i \cap \hat{\cal X})\right).
\end{align*}
Hence, we obtain:
\begin{align*}
0  & \in   \ri \left(M_{i,:}(X_i - \bar x) \right) \quad \forall i=0:r,  \qquad 0   \in  \ri \left( M_{i,:}(X_i \cap \hat{\cal X} - \bar x) \right)  \quad \forall i=r+1:m.
\end{align*}
and consequently
\[   0 \in \ri \left( (\textbf{+}_{i=0}^r M_{i,:} (X_i - \bar x))  +  (\textbf{+}_{i=r+1}^m M_{i,:} (X_i \cap \hat{\cal X} - \bar x)) \right).  \]
Then, using the definition of the relative interior, we have that there exists $ \delta >0$  such that   for all $y^*_{V^\perp} \in V^\perp$,  we can set  $ \textbf{x} =(x; x_1; \cdots; x_m) \in \textbf{X}$ satisfying: 
\[  \sum _{i=0}^r M_{i,:} (x_i - \bar x) + \sum _{i=r+1}^m M_{i,:} (x_i - \bar x)  =   - \frac{\delta}{\|y^*_{V^\perp}\|} y^*_{V^\perp},  \]
where we used the convention $x_0 = x$.  Hence, we obtain:
\[
M(\textbf{x}  - \bar{\textbf{x}} ) = - \frac{\delta}{\|y^*_{V^\perp}\|} y^*_{V^\perp}. 
\]
Based on this relation, we further get:
\[
\delta \|y^*_{V^\perp}\| \leq \langle \nabla g(x^*), x - x^*\rangle \;.
\]
We need to ensure that $x$ can be chosen bounded.  Let us consider a slightly different  problem than optimization problem \eqref{eq:extended}:
\begin{equation}
\label{eq:aux}
\min_{\textbf{x} \in \rset^{n(m+1)}} g(x) + \sum_{i=1}^m \mathbb{I}_{X_i}(x_i) + \sum_{i=1}^m \mathbb{I}_{\{0\}}(x - x_i) + \mathbb{I}_{\{g \leq g(x^*) + 1/2\}}(x).
\end{equation}
Compared to \eqref{eq:extended} we only added a constraint on $x$ to be in a sublevel set of $g$.  From the line segment principle  \cite{RocWet:98}  problem \eqref{eq:aux}  has also  a nonempty  relative interior.  Moreover,  since $g$ is $1$-strongly convex function, this constraint enforces  $x$ to be bounded, that is  $1/2 \|x - x^*\|^2 \leq g(x) - g(x^*) \leq 1/2$, or, equivalently $ \|x - x^*\| \leq 1$. Then, since $g(x^*) < g(x^*) + 1/2$, it is clear that the primal optimal solution of \eqref{eq:aux} is the same as the one of \eqref{eq:extended}. Moreover, as the new constraint will not be active, it has no impact on the KKT conditions and thus the dual optimal sets of both problems is the same $Y^*$. Hence, $x$ can be chosen in the bounded sublevel set $\{x: g(x) \leq g(x^*)+1/2\}$ and we get
\[
 \|y^*_{V^\perp}\| \leq \frac{\|\nabla g(x^*)\|  \|x - x^*\| }{\delta} \leq \frac{\|\nabla g(x^*)\|  }{\delta},
\]
which is enough to prove the compactness of $\mathcal K$.  \hfill$\square$
\end{proof}

\noindent Now,  we are ready  to derive one of the  main results of this section, which states that  a local quadratic growth condition holds for  the dual on a ball around $Y^*$.

\begin{theorem}
\label{th_primal-dual_qg}
Assume that the sets $X_i$'s satisfy Slater's condition, that is there exists $1 \leq r \le m$ such that the sets $X_{r+1}, \cdots, X_m $ are polyhedral and there exists $\bar x \in 
\left(\bigcap_{i=1}^r \ri(X_i) \right) \bigcap \left( \bigcap_{i=r+1}^m X_i \right) $, and $g$ is 1-strongly convex and with $L$-Lipschitz continuous gradient. Then, the dual function $d$ satisfies a local quadratic growth condition,  that is 
there exists $\sigma'>0$ and $R'>0$ such that: 
\[  d(y) - d^* \geq \frac{\sigma'}{2} \dist^2(y, Y^*) \quad  \forall y: \; \dist(y, Y^*)\leq R'. \] 
\end{theorem}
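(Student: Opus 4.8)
The plan is to feed a lower bound on the quantity appearing in Lemma~\ref{lem:lower_bound_on_D} that is \emph{linear} in $\dist(y,Y^*)$. Fix $R'>0$, set $D_{R'}=2\|M\|R'/L$, write $y^*=\Pi_{Y^*}(y)$ and $s(y)=\sup_{u\in\textbf{X}\cap\textbf{B}}-\langle y-y^*,Mu\rangle$ with $\textbf{B}$ the ball of radius $D_{R'}$ around $\textbf{x}^*=(x^*;\cdots;x^*)$. Lemma~\ref{lem:lower_bound_on_D} already gives $d(y)-d^*\ge\min\big(s(y)/2,\ s(y)^2/(2LD_{R'}^2)\big)$ for all $y$ with $\dist(y,Y^*)\le R'$. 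Hence it suffices to prove $s(y)\ge c\,\dist(y,Y^*)$ for some constant $c>0$ on that ball: then, since $\dist(y,Y^*)\le R'$, the quadratic branch is bounded below by $\tfrac{c^2}{2LD_{R'}^2}\dist^2(y,Y^*)$ and the linear branch $c\,\dist(y,Y^*)/2\ge\tfrac{c}{2R'}\dist^2(y,Y^*)$, so $d(y)-d^*\ge\tfrac{\sigma'}{2}\dist^2(y,Y^*)$ with $\sigma'=\min\{c^2/(LD_{R'}^2),\,c/R'\}$, which is the claimed local quadratic growth.

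The first substantive step is to locate the residual $y-y^*$. By Lemma~\ref{lem:structure_Y_star}, $Y^*=V+\mathcal K$ contains the whole affine subspace $y^*+V$, so $\Pi_{Y^*}(y)=y^*$ forces $\langle y-y^*,v\rangle=0$ for all $v\in V$; that is, $y-y^*\in V^\perp=\oplus_{i=0}^m M_{i,:}(\mathrm{Span}(X_i-\bar x))$. The second step extracts a uniform ``width'' of $\textbf{X}$ in those directions. The proof of Lemma~\ref{lem:structure_Y_star} establishes $0\in\ri\big(\sum_{i=0}^m M_{i,:}(X_i-\bar x)\big)$ (the polyhedral blocks restricted to $\mathrm{Span}(\cap_{i\le r}X_i)$), hence there is $\delta>0$ such that for every unit vector $z\in V^\perp$ one can pick $\textbf{x}_z\in\textbf{X}$ with $M(\textbf{x}_z-\bar{\textbf{x}})=-\delta z$. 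I would upgrade the boundedness argument of that proof to guarantee moreover $\|\textbf{x}_z-\textbf{x}^*\|\le C$ with $C$ independent of $z$: intersect each non-polyhedral $X_i$ with a large ball around $\bar x$ (this leaves $\bar x$ in the relative interior and only changes $\delta$), which bounds the components $x_1,\dots,x_m$ uniformly, and bound the free component $x_0$ through the injective block $M_{0,:}$. Since $z\mapsto-\delta z$ sweeps a fixed small ball of $V^\perp$, one constant $C$ serves all $z$.

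The third step lands inside $\textbf{X}\cap\textbf{B}$ by scaling. Taking $z=(y-y^*)/\|y-y^*\|$ and the convex combination $u=(1-\gamma)\textbf{x}^*+\gamma\,\textbf{x}_z$ with $\gamma=\min\{1,\,D_{R'}/C\}$, we get $u\in\textbf{X}$ by convexity, $\|u-\textbf{x}^*\|=\gamma\|\textbf{x}_z-\textbf{x}^*\|\le D_{R'}$ so $u\in\textbf{B}$, and since $M\textbf{x}^*=M\bar{\textbf{x}}=0$ we obtain $-\langle y-y^*,Mu\rangle=\gamma\langle y-y^*,-M\textbf{x}_z\rangle=\gamma\delta\|y-y^*\|$. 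Thus $s(y)\ge\gamma\delta\,\dist(y,Y^*)$, i.e.\ $c=\gamma\delta$; this is also consistent with the strict positivity of $s(y)$ for $y\notin Y^*$ recorded around \eqref{eq:positive_for_any_y}. Combining with the reduction of the first paragraph completes the proof, and the radius $R'$ may be taken to be any positive constant.

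The main obstacle is the \emph{uniform} boundedness of the witnessing points $\textbf{x}_z$ over the unit sphere of $V^\perp$: the proof of Lemma~\ref{lem:structure_Y_star} only needed to bound the $x_0$-component (via a sublevel set of $g$) in order to bound $\|y^*_{V^\perp}\|$, whereas here the step that scales $\textbf{x}_z$ into $\textbf{B}$ requires the whole vector $\textbf{x}_z-\textbf{x}^*$ to be bounded by a constant that does not degrade as the direction $z$ varies; otherwise the scaling factor $\gamma$, and hence the growth constant $c=\gamma\delta$, could collapse to zero.
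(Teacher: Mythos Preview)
Your route is sound and genuinely different from the paper's. The paper argues by compactness: it sets $\phi(d)=\sup_{u\in\textbf{X}\cap\textbf{B}}-\langle d,Mu\rangle$, observes this is continuous because $\textbf{X}\cap\textbf{B}$ is compact, and then takes the infimum of $\phi$ over the set of normalized residual directions $(y-\Pi_{Y^*}(y))/\|y-\Pi_{Y^*}(y)\|$; attainment plus \eqref{eq:positive_for_any_y} is used to force the infimum strictly positive, and the final $R'$ is obtained by shrinking $R$ so that only the quadratic branch of Lemma~\ref{lem:lower_bound_on_D} is active. Notably, Lemma~\ref{lem:structure_Y_star} is stated in the paper but is not invoked in the proof of the theorem. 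You instead make essential use of Lemma~\ref{lem:structure_Y_star}: you place the residual in $V^\perp$ and then, for each unit $z\in V^\perp$, build an explicit witness $u\in\textbf{X}\cap\textbf{B}$ from the Slater point by scaling into the ball, obtaining $s(y)\ge\gamma\delta\,\dist(y,Y^*)$ with a concrete constant. Your argument is more constructive and delivers an explicit $\sigma'$; the paper's is shorter but leans on a compactness step that, as written, is rather terse about why the infimum is attained at a direction to which \eqref{eq:positive_for_any_y} applies.

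One point to complete in your boundedness step: intersecting only the \emph{non-polyhedral} $X_i$ with a ball around $\bar x$ does not by itself bound all of $x_1,\dots,x_m$, because the polyhedral blocks $X_{r+1},\dots,X_m$ (even after restriction to $\hat{\mathcal X}=\mathrm{Span}(\cap_{i\le r}X_i)$) may well be unbounded. The remedy is the same trick applied once more: the proof of Lemma~\ref{lem:structure_Y_star} in fact yields $\bar x\in\ri(X_i\cap\hat{\mathcal X})$ for each polyhedral index $i>r$, so those sets too can be intersected with a large ball around $\bar x$ while keeping $0$ in the relative interior of the Minkowski sum. Now every summand in $\sum_i M_{i,:}(X_i'-\bar x)$ is compact, a single $C$ bounds all $\textbf{x}_z$ (your bound on $x_0$ via the injective first block column then goes through verbatim), and the scaling into $\textbf{B}$ gives the uniform $c=\gamma\delta>0$. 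With this adjustment your proposal is complete.
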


\begin{proof}
From Lemma~\ref{lem:lower_bound_on_D} it follows that for any $R>0$ and any  $y$ such that $\dist(y, Y^*) \leq R$,  we have:
\[
d(y) - d^* \geq  \min\left(\frac{\sup_{u \in  {\textbf{X}} \cap \textbf{B}}-  \langle y - y^*, M u \rangle}{2}, \frac{(\sup_{u \in  {\textbf{X}} \cap \textbf{B}}-  \langle y - y^*, M u \rangle)^2}{2LD_R^2}\right)
\]
where recall that  $\textbf{B}$ denotes the ball of center $\textbf{x}^*$ and radius $D_R = \frac 2L \|M\| R$. Let us denote:
\[
\sigma = \inf_{y \in \rset^{mn} \setminus Y^*} \sup_{u \in  {\textbf{X}} \cap \textbf{B}}- \Big\langle \frac{y - \Pi_{Y^*}(y)}{\|y - \Pi_{ Y^*}(y)\|}, M u \Big\rangle = \inf_{\|d\| \leq 1} \sup_{u \in  {\textbf{X}} \cap \textbf{B}}  - \langle d, M u \rangle
\]
As $ {\textbf{X}} \cap \textbf{B}$ is compact, the function $\phi(d) =  \sup_{u \in  {\textbf{X}} \cap \textbf{B}}- \langle d, M u \rangle$ is continuous. Moreover, since the set $\{d: \|d\| \leq 1\}$  is compact, there exists $\bar y \in \rset^{mn} \setminus Y^*$ such that $\sigma = \phi(\frac{\bar y - \Pi_{Y^*}(\bar y)}{\|\bar y - \Pi_{Y^*}(\bar y)\|})$. We also conclude from  \eqref{eq:positive_for_any_y} that $\sigma > 0$. Thus, we have:
\[
d(y) - d^* \geq  \min\left(\frac{\sigma}{2}\dist(y, Y^*), \frac{\sigma^2}{2LD_R^2}\dist^2(y, Y^*)\right).
\]
Finally, let us note that $\frac{\sigma}{2}\dist(y,Y^*) \geq \frac{\sigma^2}{LD_R^2}\dist^2(y,Y^*)$ as soon as $\dist(y, Y^*) \leq \frac{L D_R^2}{\sigma}$. Hence, we get our statement  by taking $\sigma' = \frac{\sigma^2}{2LD_R^2}$ and $R' = \min(R, \frac{L D_R^2}{\sigma})$.  \hfill$\square$
\end{proof}

\noindent Finally, we show that under the above assumptions the dual satisfies a local quadratic growth condition on any sublevel set.
\begin{theorem}
\label{th_qg}
Let the assumptions of Theorem \ref{th_primal-dual_qg} hold.  Then, for any fixed  dual variable $y^0$  there exists $\sigma = \sigma(y_0)>0$ such that:
\begin{align*}
d(y) - d^* \ge \frac{\sigma}{2} \text{dist}^2(y,Y^*)^2 \quad \forall y: d(y) \le d(y^0) .
\end{align*}
\end{theorem}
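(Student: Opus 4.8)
The plan is to bootstrap the ball estimate of Theorem~\ref{th_primal-dual_qg} to an arbitrary sublevel set in two stages: first turn the quadratic growth near $Y^*$ into a \emph{global linear growth} estimate valid outside the ball, and then observe that such a linear estimate forces $\dist(y,Y^*)$ to be bounded on any sublevel set $\{y:d(y)\le d(y^0)\}$, on which a linear lower bound and a quadratic one differ only by a constant. Throughout we use that $d$ is convex and that $Y^*$ is a nonempty closed convex set with $d(y^*)=d^*$ for every $y^*\in Y^*$ (both guaranteed under Slater's condition, as already exploited in Lemma~\ref{lem:structure_Y_star} and Theorem~\ref{th_primal-dual_qg}), and we let $\sigma',R'$ be the constants produced by Theorem~\ref{th_primal-dual_qg}.

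I would fix $y^0$ and assume $d(y^0)>d^*$, since otherwise the sublevel set coincides with $Y^*$ and the statement is vacuous. The linear growth step goes as follows. Take any $y$ with $\dist(y,Y^*)>R'$, set $y^*=\Pi_{Y^*}(y)$ and $t=R'/\dist(y,Y^*)\in(0,1)$, and consider the intermediate point $z=(1-t)y^*+t\,y$ on the segment $[y^*,y]$. Because $Y^*$ is convex, the variational characterization of the projection gives $\Pi_{Y^*}(z)=y^*$, hence $\dist(z,Y^*)=\|z-y^*\|=R'$ exactly, and Theorem~\ref{th_primal-dual_qg} yields $d(z)-d^*\ge\frac{\sigma'}{2}(R')^2$. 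On the other hand, convexity of $d$ and $d(y^*)=d^*$ give $d(z)-d^*\le t\,(d(y)-d^*)$, so that
\[
d(y)-d^* \;\ge\; \tfrac{1}{t}\bigl(d(z)-d^*\bigr) \;\ge\; \frac{\sigma' R'}{2}\,\dist(y,Y^*)\qquad\text{whenever }\dist(y,Y^*)>R'.
\]

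It then remains to combine this with Theorem~\ref{th_primal-dual_qg}. For $y$ in the sublevel set with $\dist(y,Y^*)>R'$, the displayed inequality and $d(y)\le d(y^0)$ force $\dist(y,Y^*)\le M_0:=\frac{2(d(y^0)-d^*)}{\sigma' R'}$; since $\dist(y,Y^*)\le M_0$ we get $\dist(y,Y^*)\ge\dist^2(y,Y^*)/M_0$, hence $d(y)-d^*\ge\frac{\sigma'R'}{2M_0}\dist^2(y,Y^*)$. For $y$ in the sublevel set with $\dist(y,Y^*)\le R'$, Theorem~\ref{th_primal-dual_qg} already gives $d(y)-d^*\ge\frac{\sigma'}{2}\dist^2(y,Y^*)$. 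Taking $\sigma=\sigma(y^0)=\min\{\sigma',\,\sigma'R'/M_0\}$ proves the theorem. I do not expect a genuine obstacle here; the only point needing care is the elementary geometric fact that projection onto the convex set $Y^*$ is constant along the segment joining a point to its projection, which is exactly what guarantees $\dist(z,Y^*)=R'$ rather than merely $\le R'$ and thus makes the convexity argument deliver a nonvacuous linear lower bound. Conceptually, quadratic growth near $Y^*$ propagates by convexity into linear growth far from $Y^*$, and the latter already confines every sublevel set to a bounded tube around $Y^*$, where the two notions coincide up to constants.
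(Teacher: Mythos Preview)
Your argument is correct and follows essentially the same route as the paper: both bootstrap the ball estimate of Theorem~\ref{th_primal-dual_qg} into a global linear lower bound $d(y)-d^*\ge \tfrac{\sigma'R'}{2}\dist(y,Y^*)$ for $\dist(y,Y^*)>R'$ by pulling back along the segment to the projection and invoking convexity (the paper phrases this via a subgradient inequality at the intermediate point, you use the chord inequality directly, which is equivalent). The only substantive difference is in the last step: the paper first deduces quadratic growth on the particular sublevel set $\{d\le d^*+\sigma'(R')^2/2\}$ and then appeals to Proposition~1 of \cite{FerQu:18} to extend to an arbitrary sublevel set, whereas you bypass that external reference by observing directly that on $\{d\le d(y^0)\}$ the linear bound caps $\dist(y,Y^*)$ by $M_0=2(d(y^0)-d^*)/(\sigma'R')$, making the argument self-contained.
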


\begin{proof}
From Theorem \ref{th_primal-dual_qg}  we have that there exists $ \sigma'>0$ and $R'>0$ such that 
\begin{align*}
 d(y) - d^* \ge \frac{\sigma'}{2}\text{dist}^2(y,Y^*) \qquad \forall y: \text{dist}(y,Y^*) \le R'.
\end{align*}
Take now $y$  such that $\dist(y, Y^*) > R'$ and $y^* \in Y^*$. Let us denote $y_R = y^* + \frac{R'}{\|y - y^*\|}(y-y^*)$ and observe that $\text{dist}(y_R,Y^*)\le \|y_R-y^*\| = R'$. Then,  for any $q_R \in \partial d(y_R)$ we have: 
\begin{align*}
d(y) - d^* &\geq d(y_R) + \langle q_R, y - y_R\rangle - d(y^*) \\
& = d(y_R) -d(y^*) + \frac{\|y - y^*\| - R'}{R'} \langle q_R, y_R - y^*\rangle \\
& \geq \big(1 +  \frac{\|y - y^*\| - R'}{R'}\big)(d(y_R) - d(y^*)) \\ 
&\geq  \frac{\|y - y^*\|}{R'} \cdot \frac{\sigma'}{2}\|y_R - y^*\|^2  = \frac{\sigma' R'}{2} \|y - y^*\| \geq \frac{\sigma' R'}{2}  \text{dist}(y,Y^*). 
\end{align*}
Moreover,  $\frac{\sigma'}{2}  \text{dist}^2(y,Y^*) \leq \frac{\sigma' R'}{2}  \text{dist}(y,Y^*)$ if and only if $\ \text{dist}(y,Y^*) \leq R'$. Therefore,  we get that:
\[ d(y) - d^* \geq \min \left( \frac{\sigma'}{2}  \text{dist}^2(y,Y^*), \frac{\sigma' R'}{2}  \text{dist}(y,Y^*) \right) \quad \forall y \in \rset^{mn}.\]
Now, considering $y$ only in the sublevel set   $d(y) - d^* \leq \sigma' (R')^2 / 2$, we have: 
\[ \frac{\sigma' (R')^2}{2} \geq d(y) - d^* \geq \min \left( \frac{\sigma'}{2}  \text{dist}^2(y,Y^*), \frac{\sigma' R'}{2}  \text{dist}(y,Y^*) \right),  \]
which yields then that  $y$ must satisfy  $\dist(y, Y^*) \leq R'$ and consequently $$\min \left( \frac{\sigma'}{2}  \text{dist}^2(y,Y^*), \frac{\sigma' R'}{2}  \text{dist}(y,Y^*) \right)  = \frac{\sigma'}{2}  \text{dist}^2(y,Y^*). $$ In  conclusion, we get that there exists a sublevel set where the quadratic error bound holds for the dual:
\[   d(y) - d^* \geq \frac{\sigma'}{2}  \text{dist}^2(y,Y^*)  \qquad \forall y:  d(y) \leq d^* + \sigma' (R')^2 / 2.   \]
Now, since the quadratic error bound holds for a particular sublevel set, by Proposition 1 in \cite{FerQu:18}, it follows that  the quadratic error bound holds on any given  sublevel set $\{y: \; d(y) \leq d(y^0)\}$ with a constant $\sigma$ depending on $y^0$.   \hfill$\square$
\end{proof}

\noindent Our main result of this section states that we have  on any sublevel set  a  quadratic growth condition on the dual of \eqref{bap-v}, provided that $g$ is smooth strongly convex function and the sets   $X_i$s satisfy Slater's condition. We conjecture that a similar result holds by replacing the Slater's condition with the linear regularity condition \eqref{lr}. We will investigate this conjecture in our future work.  It is important to note that when all the sets $X_i$ are polyhedral, that is $r=0$,  the statement of Theorem \ref{th_qg}  has been already proved in \cite{NecNes:15,NecNed:15}. In this paper (see Theorem \ref{th_qg}) we generalize this result to general convex sets satisfying Slater's condition (note that our result still allows that some sets to be polyhedral).   Further, we observe  that the composite form of the dual function $d(\cdot)$ in the  problem \eqref{dbap} is appropriate for the (accelerated) coordinate descent framework \cite{Nes:12,NecCli:16,LiLin:18,LuXia:14,RicTak:14}, since  $\tilde{d}(\cdot)$ is a smooth convex function, while the nonsmooth part  $\text{supp}(\cdot)$ is a separable simple convex function.  Moreover, the large  number of blocks $m$ in the dual variable $y$   represents another  motivation for using the coordinate descent approach. Therefore, in the  sequel we analyze the convergence behavior of (accelerated) coordinate descent algorithms  for solving the dual problem \eqref{dbap}, which  satisfies  a quadratic growth condition  with a constant $\sigma>0$ on a given  sublevel set $\{y: \; d(y) \leq d(y^0)\}$.


\section{Random coordinate descent}
In this section we consider a random coordinate descent algorithm  for solving the dual formulation \eqref{dbap}.   Coordinate descent algorithms and their accelerated couterparts  have been intensively studied  in the last decade thanks to their capacity to handle large-scale applications \cite{Nes:12}. Under the natural limitations of first order methods, their iteration complexity has been established for smooth  convex, strongly convex and composite  problems in e.g., \cite{FerRic:15,Nes:12,LuXia:14,RicTak:14}. Linear convergence of such algorithms under different types of relaxation of strong convexity condition (such as error bound or quadratic growth) has been derived in \cite{NecCli:16,FerQu:18}. Sublinear rate  of a primal sequence generated by  a random accelerated  dual coordinate ascent has been given recently in \cite{LiLin:18}.  For simplicity of the exposition we consider  uniform probabilities on $[m]$ for selecting the block $y_i$ of $y$.  We observe that the smooth part $\tilde{d}$ has the  (block) coordinate  gradient given by the expression: 
$$ \nabla_i \tilde{d}(y) = - \nabla g^*\left(- \sum\limits_{j=1}^m  y_j\right).  $$  
From this  it follows immediately that   the gradient of $\tilde{d}$  is block coordinate   Lipschitz   continuous  with the Lipschitz constants  $L_i = 1$ for all $i \in [m]$, since recall that we assume  $g$ to be $1$-strongly convex:
\begin{align*} \|  \nabla_i \tilde{d}(y +  U_i t_i)  - \nabla_i \tilde{d}(y) \| & = \left\|  \nabla g^*\left(- y_i - t_i - \sum\limits_{j \neq i}  y_j\right)  - \nabla g^*\left(- \sum\limits_{j=1}^m  y_j\right) \right\| \\ 
& \le  \norm{t_i}   \quad \forall t_i \in \rset^{n}, 
\end{align*}  
where $U_i$, as usual in the coordinate descent literature,  denotes  the $i$th block matrix of $I_{mn}$ corresponding to block component $y_i$ of $y$. By standard reasoning we can prove \cite{Nes:12}:
\begin{align}
\label{eq:lipJ}
 \tilde{d}(y + U_i t_i) \leq  \tilde{d}(y) + \langle \nabla_{i} \tilde{d}(y), t_i \rangle + \frac{1}{2} \norm{t_i}^2 \quad \forall t_i \in \rset^{n}.
\end{align}

\noindent It is also important to note  that for any closed convex set ${\cal X} \subseteq \rset^n$  and scalar $\alpha > 0$ the following refinement holds for the proximal operator of its support:
\begin{align}
\text{prox}_{\alpha \cdot \text{supp}_{{\cal X}}}(y) &= \arg\min_{z \in \rset^n} \left( \frac{1}{2}\norm{z - y}^2 + \alpha \cdot \text{supp}_{{\cal X}}(z) \right) \nonumber \\
& = \arg\min_{z \in \rset^n} \left( \frac{1}{2}\norm{z - y}^2 + \alpha \cdot \max_{t \in {\cal X}} \; \langle t, z \rangle \right) \nonumber \\
&= y - \alpha \cdot \arg\max_{t \in {\cal X}} \; \frac{1}{2}\norm{\alpha \cdot t - y}^2 = y - \alpha \cdot \Pi_{{\cal X}}(\alpha^{-1} \cdot y).
\label{eq:prox_supp}
\end{align}
Let us also establish a relation between the primal and  dual variables. For any dual variable $y\in \rset^{mn}$ let us define the corresponding primal variable: 
\[ x(y) =\arg \min_x \langle - \sum_{i=1}^m y_i, x \rangle - g(x). \]   Further, we observe that $$  \nabla \tilde{d} (y) = -\left( \nabla g^*(\sum_{i=1}^m y_i);\cdots;\nabla g^*(\sum_{i=1}^m y_i) \right) = - (x(y);\cdots;x(y)). $$ Similarly,  $  \nabla \tilde{d} (y^*) = -(x^*;\cdots;x^*)$ for all $y^* \in Y^*$.  Since $g$ is assumed $1$-strongly convex, then $\nabla \tilde{d}$ is $1$-Lipschitz  continuous  \cite{RocWet:98}:
\[  \|\nabla \tilde{d} (y) -  \nabla \tilde{d} (y') \|  \leq \| y - y'\| \quad \forall y, y' \in \rset^{mn}.  \]   
Now, considering $y^* = \Pi_{Y^*}(y)$ and using the quadratic growth property for the dual, we have:
\[   \|\nabla \tilde{d} (y) -  \nabla \tilde{d} (y^*) \|^2  \leq \text{dist}^2(y,Y^*) \leq \frac{2}{\sigma} (d(y) - d^*) \quad \forall y: d(y) \leq d(y^0).   \]
Using now the explicit expressions for the $\nabla \tilde{d} (y)$ and $\nabla \tilde{d} (y^*)$ derived previously, we get the following primal-dual inequality:
\begin{align}
\label{eq:connection-xy}
\|  x(y) - x^*\|^2  = \frac{1}{m}   \|\nabla \tilde{d} (y) -  \nabla \tilde{d} (y^*) \|^2  \leq \frac{2}{\sigma m} (d(y) - d^*) \quad \forall y: d(y) \leq d(y^0). 
\end{align}

\noindent Now, let $y^0 \in \rset^{mn}$ be the initial point and using that $L_i = 1$ for all $i \in [m]$, then we consider the following random  coordinate descent (RCD) scheme:
\begin{align*}
\begin{cases}
& \textbf{RCD}: \\
& \text{For} \; k\geq 0 \; \text{do}\!:\\
&\text{Choose uniformly random index} \; i_k \in [m] \; \text{and update:} \\
&y_{i_k}^{k+1} = \text{prox}_{\text{supp}_{X_{i_k}}}\left(y_{i_k}^k -   \nabla_{i_k} \tilde{d}(y^k) \right)  \\
& y_j^{k+1} = y_j^k \qquad \forall j \neq i_k.
\end{cases}
\end{align*}

\noindent Based on the particular form \eqref{eq:prox_supp} of the proximal operator of the support function of $X_{i_k}$, we can rewrite the RCD iteration in a more explicit form as:
\begin{align} 
\label{eeq:iterD}
y_{i_k}^{k+1} & = \left(y_{i_k}^k -   \nabla_{i_k} \tilde{d} (y^k) \right) -  \Pi_{X_{i_k}} \left( y_{i_k}^k -   \nabla_{i_k} \tilde{d} (y^k) \right)  \nonumber \\
& = \left(y_{i_k}^k +  \nabla g^*(- \sum_{j=1}^m  y_j^k) \right) -  \Pi_{X_{i_k}} \left( y_{i_k}^k +\nabla g^*(- \sum_{j=1}^m  y_j^k) \right). 
\end{align}

\noindent  Hence, each iteration of RCD requires a  projection onto a single simple set $X_{i_k}$.  Recall that we assume that projections onto individual sets $X_i$ are easy for all $i \in [m]$.  Additionally, at each iteration we need to also  evaluate the gradient of the Fenchel conjugate of $g$, i.e.,  $$\nabla g^*(- \sum_{j=1}^m  y_j^k)  = \arg \max_{x \in \rset^n}  \langle - \sum_{j=1}^m  y_j^k, x \rangle -g(x).  $$ Therefore, if  $\max_{x \in \rset^n}  \langle y, x \rangle -g(x)$ can be computed efficiently for any  given $y$, then we have a very fast implementation of the RCD iteration. In Section \ref{sec:dykstra} we show that the best approximation problem yields indeed an efficient implementation of RCD iteration.  Let us now analyze the convergence behavior of RCD algorithm.  It is well-known that RCD has sublinear convergence  of order ${\cal O}(1/k)$ in expectation when the smooth component has coordinate Lipschitz continuous gradient, see e.g., \cite{FerRic:15,NecCli:16,Nes:12,LuXia:14,RicTak:14}.   Moreover, linear convergence of  RCD  was proved in  \cite{FerRic:15,LuXia:14,Nes:12,RicTak:14}  for the strongly convex case and further extended in \cite{NecCli:16} to the error bound case.  Below, we provide a simple proof for the linear convergence of RCD  under the  quadratic growth condition, with better constants in the rates than e.g.  \cite{Nes:12,RicTak:14}.   
\begin{align}
\label{dual_qg}
d(y) - d^* \ge \frac{\sigma}{2} \text{dist}^2(y,Y^*)^2 \quad \forall y: d(y) \le d(y^0) .
\end{align}
Recall that, according to Theorem  \ref{th_qg}, $\sigma$ from the quadratic growth condition  \eqref{dual_qg} depends on $y^0$. 

\begin{theorem}
\label{th_convrate_dualfun}
Let the assumptions of Theorem \ref{th_primal-dual_qg} hold (hence,  the quadratic growth  \eqref{dual_qg} holds for some $\sigma >0$).   Then, the following linear convergence rate in expectation holds for the sequence $\{y^k\}_{k \ge 0}$ generated by the RCD algorithm:
\begin{align*}
\mathbb{E}[d(y^{k}) - d^*]  \le \left( 1 - \frac{\sigma}{m(\sigma + 1)}  \right)^k  \left( d(y^{0}) - d^* + \frac{1}{2}  \text{dist}^2(y^0,Y^*) \right) \quad \forall k \geq 0.
\end{align*}
\end{theorem}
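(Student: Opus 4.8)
The plan is to derive a one-step contraction for the Lyapunov function $\Phi_k := (d(y^k)-d^*) + \tfrac12\dist^2(y^k,Y^*)$, whose value at $k=0$ is exactly the bracket appearing in the statement, and then to iterate. The two ingredients are the standard descent step of proximal coordinate descent (with coordinate Lipschitz constants $L_i=1$, cf.\ \eqref{eq:lipJ}) and the quadratic growth \eqref{dual_qg}.

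First I would observe that the RCD update $y_{i_k}^{k+1}$ is precisely the minimizer of the $1$-strongly convex function $\psi_{i_k}(z) := \langle\nabla_{i_k}\tilde d(y^k), z-y_{i_k}^k\rangle + \tfrac12\|z-y_{i_k}^k\|^2 + \text{supp}_{X_{i_k}}(z)$ --- this is just \eqref{eq:prox_supp} rewritten --- so that $\psi_{i_k}(y_{i_k}^{k+1}) + \tfrac12\|z-y_{i_k}^{k+1}\|^2 \le \psi_{i_k}(z)$ for every $z$. Combining \eqref{eq:lipJ} with the separable splitting $d = \tilde d + \sum_i \text{supp}_{X_i}$ and the fact that $y^{k+1}$ differs from $y^k$ only in block $i_k$ yields the one-step bound $d(y^{k+1}) \le d(y^k) - \text{supp}_{X_{i_k}}(y_{i_k}^k) + \psi_{i_k}(y_{i_k}^{k+1})$. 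Taking $z=y_{i_k}^k$ in the strong-convexity inequality above already gives $d(y^{k+1})\le d(y^k)$ surely, so the whole trajectory stays in $\{y:d(y)\le d(y^0)\}$ and \eqref{dual_qg} is applicable at every iterate --- a point I would state explicitly.

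Then I would set $y^*:=\Pi_{Y^*}(y^k)$, take $z = y_{i_k}^*$ in the strong-convexity inequality, and expand the squares. The decisive cancellation is that the term $\tfrac12\|y_{i_k}^k-y_{i_k}^*\|^2$ coming from $\psi_{i_k}(y_{i_k}^*)$ is annihilated by the corresponding term obtained from rewriting $-\tfrac12\|y_{i_k}^{k+1}-y_{i_k}^*\|^2 = -\tfrac12\|y^{k+1}-y^*\|^2 + \tfrac12\|y^k-y^*\|^2 - \tfrac12\|y_{i_k}^k-y_{i_k}^*\|^2$. Using convexity of $\text{supp}_{X_{i_k}}$, taking the conditional expectation over the uniform choice of $i_k$, and then bounding $\langle\nabla\tilde d(y^k),y^*-y^k\rangle + \text{supp}(y^*)-\text{supp}(y^k) \le \tilde d(y^*)-\tilde d(y^k) + \text{supp}(y^*)-\text{supp}(y^k) = d^*-d(y^k)$ by convexity of $\tilde d$, I expect to reach
\[
\mathbb{E}_{i_k}\Big[(d(y^{k+1})-d^*) + \tfrac12\|y^{k+1}-y^*\|^2\Big] \le \big(1-\tfrac1m\big)(d(y^k)-d^*) + \tfrac12\dist^2(y^k,Y^*).
\]
Since $\|y^{k+1}-y^*\|^2 \ge \dist^2(y^{k+1},Y^*)$, this reads $\mathbb{E}_{i_k}[\Phi_{k+1}] \le \Phi_k - \tfrac1m(d(y^k)-d^*)$.

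Finally I would invoke \eqref{dual_qg}: from $\dist^2(y^k,Y^*) \le \tfrac2\sigma(d(y^k)-d^*)$ we get $\Phi_k \le (1+\tfrac1\sigma)(d(y^k)-d^*)$, i.e.\ $d(y^k)-d^* \ge \tfrac{\sigma}{\sigma+1}\Phi_k$, hence $\mathbb{E}_{i_k}[\Phi_{k+1}] \le \big(1-\tfrac{\sigma}{m(\sigma+1)}\big)\Phi_k$. Taking total expectations, iterating, and using $d(y^k)-d^* \le \Phi_k$ together with $\Phi_0 = d(y^0)-d^* + \tfrac12\dist^2(y^0,Y^*)$ delivers exactly the claimed rate. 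The only obstacle I anticipate is bookkeeping: carefully tracking the single block that changes so that the quadratic terms cancel and the clean coefficient $\tfrac12$ in front of $\|y^{k+1}-y^*\|^2$ survives --- once that is in place, closing the recursion with quadratic growth is immediate.
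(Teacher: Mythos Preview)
Your proposal is correct and follows essentially the same route as the paper: both introduce the Lyapunov quantity $\Phi_k=(d(y^k)-d^*)+\tfrac12\dist^2(y^k,Y^*)$, exploit the $1$-strong convexity of the proximal subproblem together with \eqref{eq:lipJ} and convexity of $\tilde d$ to obtain the key one-step inequality $\mathbb{E}_{i_k}[\Phi_{k+1}\mid y^k]\le \Phi_k-\tfrac1m(d(y^k)-d^*)$, and then close the recursion via \eqref{dual_qg} through the bound $d(y^k)-d^*\ge \tfrac{\sigma}{\sigma+1}\Phi_k$. The only cosmetic difference is that the paper keeps the inequality in terms of a free vector $z$ and specializes to $z=\Pi_{Y^*}(y^k)$ after taking expectation, whereas you specialize first; the bookkeeping of the quadratic terms you anticipate is exactly the computation the paper carries out.
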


\begin{proof}
By using the Lipschitz gradient property \eqref{eq:lipJ} of the dual smooth part $\tilde{d}$ we have for any $k\geq 0$:
\begin{align}
	d(y^{k+1}) &  = \tilde{d}(y^{k+1}) + \text{supp}(y^{k+1}) \nonumber\\
	& \le \tilde{d}(y^k) + \langle \nabla_{i_k} \tilde{d}(y^k), y^{k+1}_{i_k} - y^k_{i_k} \rangle + \frac{1}{2}\norm{y^{k+1}_{i_k} - y^k_{i_k}}^2 + \text{supp}(y^{k+1}) \nonumber \\
	& = \tilde{d}(y^k) + \langle \nabla_{i_k} \tilde{d}(y^k), y^{k+1}_{i_k} - y^k_{i_k} \rangle + \frac{1}{2}\norm{y^{k+1}_{i_k} - y^k_{i_k}}^2 + \text{supp}_{X_{i_k}}(y^{k+1}_{i_k}) \nonumber \\ 
	& +\sum\limits_{j \neq i_k} \text{supp}_{X_j}(y^{k}_j). \nonumber \\
	& = \min_{z_{i_k} \in \rset^{n}} \; \tilde{d}(y^k) + \langle \nabla_{i_k} \tilde{d}(y^k), z_{i_k} - y^k_{i_k} \rangle + \frac{1}{2} \norm{z_{i_k} - y^k_{i_k}}^2 + \text{supp}_{X_{i_k}}(z_{i_k}) \nonumber \\
	& +\sum\limits_{j \neq i_k} \text{supp}_{X_j}(y^{k}_j). 
	\label{composite_descent_rel}
\end{align}
Note that  the sequence $\{y^k\}_{k \ge 0}$ generated by the RCD algorithm remains  in the sublevel set given by  $y^0$, i.e. $d(y^k) \le d(y^0)$ for all $k \geq 0$, since by taking $z_{i_k} = y^k_{i_k}$ in \eqref{composite_descent_rel} we get: 
\begin{align}\label{boundedlev}
d(y^{k+1}) \le d(y^k) \qquad \forall k \geq 0.
\end{align}
Let us define the strongly convex function $z_{i_k} \mapsto \Psi(z_{i_k};y^k)$ as:
\[  \Psi(z_{i_k};y^k) \!= \tilde{d}(y^k) + \langle \nabla_{i_k} \tilde{d}(y^k), z_{i_k} \!- y^k_{i_k} \rangle + \frac{1}{2} \norm{z_{i_k} \!- y^k_{i_k}}^2 + \text{supp}_{X_{i_k}}(z_{i_k})  +\!  \sum\limits_{j \neq i_k} \! \text{supp}_{X_j}(y^{k}_j).  \]
Since $y^{k+1}_{i_k} = \arg \min_{z_{i_k} \in \rset^{n}} \Psi(z_{i_k};y^k) $ and $\Psi(z_{i_k};y^k)$ is $1$-strongly convex, we have:
\[  \Psi(z_{i_k};y^k) \geq \Psi(y^{k+1}_{i_k};y^k) +\frac{1}{2} \| z_{i_k} - y^{k+1}_{i_k}  \|^2 \quad \forall z_{i_k} \in \rset^{n}.  \]
Using this inequality in \eqref{composite_descent_rel}  we further get:
 \begin{align*}
	d(y^{k+1})  \leq & \; \tilde{d}(y^k) + \langle \nabla_{i_k} \tilde{d}(y^k), z_{i_k} - y^k_{i_k} \rangle + \frac{1}{2} \norm{z_{i_k} - y^k_{i_k}}^2 + \text{supp}_{X_{i_k}}(z_{i_k}) \\ 
	& +\sum\limits_{j \neq i_k} \text{supp}_{X_j}(y^{k}_j)  - \frac{1}{2} \| z_{i_k} - y^{k+1}_{i_k}  \|^2  \quad \forall z_{i_k} \in \rset^{n}.
\end{align*} 
By taking the conditional expectation over $i_k$ conditioned on $y^k$ on both sides of the previous relation, we obtain :	
	\begin{align*}
	& \mathbb{E}_{i_k}[d(y^{k+1}) | y^k] \le  \;    \tilde{d}(y^k) + \frac{1}{m}\left[\langle \nabla \tilde{d}(y^k), z - y^k \rangle + \frac{1}{2}\norm{z - y^k}^2 + \text{supp}(z) \right] \\ 
	& \quad + \left(1 - \frac{1}{m} \right)\text{supp}(y^k) - \frac{1}{2}   \mathbb{E}_{i_k}[ \| z_{i_k} - y^{k+1}_{i_k}  \|^2 | y^k]  \\ 
& = \left(1 - \frac{1}{m} \right) 	\tilde{d}(y^k) + \frac{1}{m}\left[ \tilde{d}(y^k) + \langle \nabla \tilde{d}(y^k), z - y^k \rangle + \frac{1}{2}\norm{z - y^k}^2 + \text{supp}(z) \right]  \\
	& \quad + \left(1 - \frac{1}{m} \right)\text{supp}(y^k) - \frac{1}{2}   \mathbb{E}_{i_k}[ \| z_{i_k} - y^{k+1}_{i_k}  \|^2 | y^k]  \\ 
& \leq 	\left(1 - \frac{1}{m} \right) d(y^k)  + \frac{1}{m}\left[  \tilde{d}(z)   + \frac{1}{2}\norm{z - y^k}^2 + \text{supp}(z) \right]  - \frac{1}{2}   \mathbb{E}_{i_k}[ \| z_{i_k} - y^{k+1}_{i_k}  \|^2 | y^k] \\
& = \left(1 - \frac{1}{m} \right) d(y^k) +  \frac{1}{m}\left[  d(z)   + \frac{1}{2}\norm{z - y^k}^2  \right]  - \frac{1}{2}   \mathbb{E}_{i_k}[ \| z_{i_k} - y^{k+1}_{i_k}  \|^2 | y^k] \\
&=  \left(1 - \frac{1}{m} \right) d(y^k) +  \frac{1}{m} d(z)  + \frac{1}{2}   \mathbb{E}_{i_k}[ \| z_{i_k} - y^{k}_{i_k}  \|^2 - \| z_{i_k} - y^{k+1}_{i_k}  \|^2 | y^k] \\ 
& =    \left(1 - \frac{1}{m} \right) d(y^k) +  \frac{1}{m} d(z)  + \frac{1}{2}   \mathbb{E}_{i_k}[  -\| y^{k}_{i_k}  - y^{k+1}_{i_k}  \|^2 + 2 \langle z_{i_k} - y^{k}_{i_k},  y^{k+1}_{i_k}  - y^{k}_{i_k} \rangle    | y^k]  
	\end{align*}
for all  $z \in \rset^{mn}$,  where in the last inequality  we used convexity of $\tilde{d}$. Choosing   $z = y^k_* :=\Pi_{Y^*} (y^k)$ in the previous inequality, we get: 
\begin{align}   
\label{eq:eik}
& \frac{1}{2}   \mathbb{E}_{i_k}[  \| y^{k}_{i_k}  - y^{k+1}_{i_k}  \|^2 + 2 \langle y^{k}_{i_k} - (y^k_*)_{i_k},  y^{k+1}_{i_k}  - y^{k}_{i_k} \rangle    | y^k]  \\
& \leq    \left(1 - \frac{1}{m} \right) d(y^k) +  \frac{1}{m} d(y^k_*)  - \mathbb{E}_{i_k}[d(y^{k+1}) | y^k].  \nonumber 
 \end{align}
On the other hand, we also have:
\begin{align*}
& \frac{1}{2}   \mathbb{E}_{i_k}[  \text{dist}^2 (y^{k+1}, Y^*) | y^k]   =  \frac{1}{2}   \mathbb{E}_{i_k}[  \| y^{k+1}  - y^{k+1}_{*}  \|^2  | y^k]  \leq   \frac{1}{2}   \mathbb{E}_{i_k}[  \| y^{k+1}  - y^{k}_{*}  \|^2  | y^k]  \\
&  =   \frac{1}{2}   \mathbb{E}_{i_k}[  \|y^k  + U_{i_k}(y^{k+1}_{i_k} -y^k_{i_k})  - y^{k}_{*}  \|^2  | y^k]  \\  
& = \frac{1}{2}   \text{dist}^2 (y^{k}, Y^*)   +  \frac{1}{2}   \mathbb{E}_{i_k}[  \| y^{k}_{i_k}  - y^{k+1}_{i_k}  \|^2 + 2 \langle y^{k}_{i_k} - (y^k_*)_{i_k},  y^{k+1}_{i_k}  - y^{k}_{i_k} \rangle    | y^k]\\
& \leq  \frac{1}{2}   \text{dist}^2 (y^{k}, Y^*)  +  \left(1 - \frac{1}{m} \right) d(y^k) +  \frac{1}{m} d(y^k_*)  - \mathbb{E}_{i_k}[d(y^{k+1}) | y^k],
\end{align*}
where in the last inequality we used \eqref{eq:eik}. Hence, taking now full expectation and subtracting $d^*$ from both sides of the previous inequality, we obtain the following recurrence:
\begin{align}
\label{eq:rec1}
& \frac{1}{2}   \mathbb{E}[ d(y^{k+1}) -d^*  +  \text{dist}^2 (y^{k+1}, Y^*)]   \\
& \leq   \mathbb{E}[ d(y^k) -d^*  + \frac{1}{2}   \text{dist}^2 (y^{k}, Y^*)]   - \frac{1}{m}   \mathbb{E}[ d(y^k) - d^*].  \nonumber 
\end{align}
Now, using the quadratic growth condition \eqref{dual_qg}, we have:
\begin{align*}   
d(y^k) - d^*  &  = \frac{\sigma}{1+\sigma} (d(y^k) - d^*) + \left(1- \frac{\sigma}{1+\sigma}  \right) (d(y^k) - d^*) \\ 
& \geq  \frac{\sigma}{1+\sigma} (d(y^k) - d^*)  + \left(1- \frac{\sigma}{1+\sigma}  \right) \frac{\sigma}{2}  \text{dist}^2 (y^{k}, Y^*)  \\
&=  \frac{\sigma}{1+\sigma} \left( d(y^k) - d^*   + \frac{1}{2}  \text{dist}^2 (y^{k}, Y^*) \right).  
\end{align*}
Using this inequality in \eqref{eq:rec1}, we further get:
\begin{align*}
\label{eq:rec1}
& \frac{1}{2}   \mathbb{E}[ d(y^{k+1}) -d^* +  \text{dist}^2 (y^{k+1}, Y^*)]  \\  
& \leq \left( 1 -  \frac{\sigma}{m(1+\sigma)} \right)  \mathbb{E}[ d(y^k) -d^*  + \frac{1}{2}   \text{dist}^2 (y^{k}, Y^*)], \nonumber 
\end{align*}
which concludes our statement.  \hfill$\square$
\end{proof}	

\noindent If we define the primal sequence given by: 
\[ x^k  =\arg \min_x \langle - \sum_{i=1}^m y_i^k, x \rangle - g(x), \]   
then from relation \eqref{eq:connection-xy} and Theorem \ref{th_convrate_dualfun}  we can also derive the following  linear rate in terms of the expected quadratic distance of the primal sequence $x^k$ to the optimal solution $x^*$:
\[   \mathbb{E} [ \| x^k - x^*\|^2 ]   \leq \frac{2}{\sigma m}    \left( 1 -  \frac{\sigma}{m(1+\sigma)} \right)^k  \left( d(y^{0}) - d^* +  \frac{1}{2}   \text{dist}^2 (y^{0}, Y^*)\right).   \]  

\begin{remark}
\label{rem:rcd}
The linear rate of convergence stated in Theorem \ref{th_convrate_dualfun} clearly implies the following estimate on the total number of iterations required by RCD to obtain an $\epsilon-$suboptimal solution in expectation:
\begin{align*}
\mathcal{O}\left( \frac{m(1+\sigma)}{ \sigma} \log\left(\frac{1}{\epsilon} \right)\right).
\end{align*}
\end{remark}


\section{ Random  accelerated coordinate descent } 
\noindent  In this section we  consider an  accelerated version of the  RCD algorithm and analyze its convergence.   Let $y^0 \in \rset^{mn}$ be the initial point and $K$ be the maximum number of iterations we want to perform. Then, we consider the following  random  accelerated coordinate descent scheme:
\begin{align*}
\begin{cases}
& \textbf{RACD}(y^0,K): \\
&\text{Set} \; \theta_0 = \frac{1}{m} \; \text{and}\; z^0 = y^0\\
& \text{For} \; k = 0: K-1 \; \text{do}:\\
&  v^k = (1 - \theta_k)y^k + \theta_k z^k \\ 
&  \text{Choose uniformly random  index} \; i_k \in [m] \; \text{and update:} \\
&  z_{i_k}^{k+1} = 
\text{prox}_{\frac{1}{\theta_k m} \cdot \text{supp}_{X_{i_k}}} \left(z^k_{i_k} -  \frac{1}{\theta_k m}\nabla_{i_k} \tilde{d}(v^k) \right)  \\
&z_j^{k+1} = z_j^k \qquad \forall j \neq i_k.\\
&  y^{k+1} = v^k + m\theta_k (z^{k+1} - z^k)\\
&  \theta_{k+1} = \frac{\sqrt{\theta_k^4 + 2 \theta_k^2} - \theta_k^2}{2}
\end{cases}
\end{align*}

\noindent It is well-known that under the quadratic growth property  accelerated gradient methods exhibit linear convergence  in combination with a restarting procedure, see e.g. \cite{NecNes:15}. Moreover, in many cases, to efficiently stop the restarted accelerated  scheme one typically needs an accurate estimate of the quadratic growth constant $\sigma$. Therefore, further we present a restarting variation of RACD as propossed in  \cite{FerQu:18}, which does not require explicit knowledge of $\sigma$.  Let $y^0 \in \rset^{mn}$ be the initial point and $K$ the maximum number of iterations, then we consider the following restarted random accelerated  coordinate descent scheme:
\begin{align*}
\begin{cases}
& \textbf{Restarted-RACD}(y^0): \\
&\text{Set} \; \tilde{y}^0 = y^0.\\
&\text{Choose restart epochs} \; \{K_0, \cdots, K_r, \cdots\}\\
& \text{For} \; r \ge 0 \; \text{do}: \\
&  \bar{y}^{r+1} = \text{ARCD}(\tilde{y}^r,K_r) \\
&  \tilde{y}^{r+1} =  \bar{y}^{r+1} \textbf{1}_{d(\bar{y}^{r+1}) \le d(\tilde{y}^{r})} + \tilde{y}^{r} \textbf{1}_{d(\bar{y}^{r+1}) > d(\tilde{y}^{r})}  \\
\end{cases}
\end{align*}

\noindent Note that the update rule for $ \tilde{y}^{r+1}$ forces this restarted iterative process to produce sequences of points at the end of each epoch that are always in the sublevel set given by  $ \tilde{y}^{r}$ of  the previous epoch, and consequently in the original sublevel set  given by $y^0$, where our dual function satisfies the quadratic growth condition \eqref{dual_qg}.   Further, we briefly present the complexity estimate for the Restarted-RACD algorithm, more  details can be found  in \cite{FerQu:18}. We will use the general index notation $ z^{j,p} $ as the $j$th iterate from $p$th epoch and we define the following constants:
\[   \beta = \left\lceil \max\left(0, \log_2(K^*/K_0) \right)\right\rceil \quad  \text{and}  \quad K^* = \left\lceil \frac{2e}{\theta_0} \left(\sqrt{\frac{1 + \sigma}{\sigma}} - 1 \right) + 1\right \rceil. \]
We also define the iteration sequence: 
\[  y^k = \begin{cases}\tilde{y}^p & \text{if} \; j_k = K_{p-1} \\ y^{j_k,p} & \text{otherwise} \end{cases}, \;\; \text{where}  \; j_k  = k -  \sum\limits_{i = 1}^{p} K_i. \]
Finally, we fix the length of the first epoch, $K_0$,  to:
\[  K_0 =  \left\lceil \frac{2e}{\theta_0} \left(\sqrt{\frac{1 + \bar \sigma}{\bar \sigma}} - 1 \right) + 1\right \rceil,  \]
where $\bar{\sigma}$ is an estimate of the unknown constant $\sigma$ from \eqref{dual_qg}. Then, we have the following linear convergence result:  

\begin{theorem}
\label{th_convrate_dualfun-a}
Let the assumptions of Theorem \ref{th_primal-dual_qg} hold and  the sequence $\{K_j\}_{j \ge 0} \subset \mathbb{N}$ satisfy: $(i) K_{2^j - 1} = 2^j K_0$ for all $j \in \mathbb{N}$; 
$(ii) |\{ 0 \le r < 2^p - 1 \; | \; K_r = 2^j K_0 \} | =  2^{p-1-j}$ for all $j \in [p]$. Then, after $p$ epochs we have the following linear rate in expectation:
\begin{align*}
\mathbb{E}\left[d(y^{k})-  d^*\right] 
& \le \left( e^{- \frac{4}{(p + 2)2^{\beta} K_0 }} \right)^{k} (d(y^{0}) - d^*).
\end{align*}
\end{theorem}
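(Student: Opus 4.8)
The plan is to combine the local quadratic growth condition \eqref{dual_qg} for $d$ on the sublevel set $\{y: d(y)\le d(y^0)\}$ (available from Theorem~\ref{th_qg}) with the standard sublinear convergence estimate for the non-restarted accelerated coordinate descent scheme ARCD, and then to run the doubling argument of \cite{FerQu:18}. For the non-restarted scheme, recall that in \eqref{dbap} the smooth part $\tilde d$ has block coordinate Lipschitz constants $L_i=1$ and $\text{supp}$ is separable, so the classical potential-function analysis of accelerated randomized coordinate descent (see \cite{FerRic:15,FerQu:18}) gives, for the output of an epoch started at a point $\bar y$ and run for $K$ iterations, an inequality of the form $\mathbb{E}[d(\bar y^K)-d^*]\le \theta_{K-1}^2\big(C_1(d(\bar y)-d^*)+C_2\,\dist^2(\bar y,Y^*)\big)$, where the sequence $\theta_k$ generated by $\theta_{k+1}=(\sqrt{\theta_k^4+2\theta_k^2}-\theta_k^2)/2$ satisfies $1/\theta_k\ge 1/\theta_0+k$, hence $\theta_{K-1}$ decays like $1/(K+1/\theta_0)$. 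I would simply quote this estimate (e.g. Proposition~1 of \cite{FerQu:18}) rather than re-derive it; it holds on all of $\mathbb{R}^{mn}$, so no sublevel-set restriction is needed at this stage.

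The first real step is to promote this sublinear bound to a per-epoch geometric contraction. The key point is that the restart rule keeps the end-of-epoch iterates in the original sublevel set: since $d(\tilde y^{r+1})=\min\big(d(\bar y^{r+1}),d(\tilde y^{r})\big)\le d(\tilde y^{r})$ deterministically, we get $d(\tilde y^r)\le d(y^0)$ for all $r$, so \eqref{dual_qg} applies at the start of every epoch, giving $\dist^2(\tilde y^r,Y^*)\le \frac{2}{\sigma}(d(\tilde y^r)-d^*)$. Plugging this into the ARCD estimate, the one-epoch bound becomes $\mathbb{E}[d(\bar y^{r+1})-d^*\mid\tilde y^r]\le \theta_{K_r-1}^2(C_1+2C_2/\sigma)(d(\tilde y^r)-d^*)$, and choosing $K_r\ge K^*=\big\lceil\frac{2e}{\theta_0}\big(\sqrt{(1+\sigma)/\sigma}-1\big)+1\big\rceil$ drives the prefactor below a fixed constant $e^{-c}$, with $c$ a universal constant fixed by $C_1,C_2$. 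Combining with $d(\tilde y^{r+1})\le d(\bar y^{r+1})$ and taking total expectations yields $\mathbb{E}[d(\tilde y^{r+1})-d^*]\le e^{-c}\,\mathbb{E}[d(\tilde y^r)-d^*]$ for every \emph{good} epoch (one with $K_r\ge K^*$), while $\mathbb{E}[d(\tilde y^{r+1})-d^*]\le\mathbb{E}[d(\tilde y^r)-d^*]$ always.

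Finally, I would chain these inequalities along the doubling schedule (i)--(ii) and translate the epoch index into the iteration count. Because $2^{\beta}K_0\ge K^*$ by the definition of $\beta$, every epoch of length $2^j K_0$ with $j\ge\beta$ is good, and a direct count shows that after the rounds prescribed by (i)--(ii) the number $N$ of good epochs is of order $2^{p-\beta}$, while the accumulated iteration count $k$ is of order $2^{p-1}K_0(p+2)$. Hence $\mathbb{E}[d(y^k)-d^*]\le e^{-cN}(d(y^0)-d^*)\le e^{-c' k/((p+2)2^{\beta}K_0)}(d(y^0)-d^*)$, and carrying the numerical constants through as in \cite{FerQu:18} makes the coefficient in the exponent equal to $4$, which is the claimed estimate. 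The main obstacle is not a single hard inequality but the bookkeeping: getting the ARCD prefactor into precisely the form that the definition of $K^*$ is calibrated to, verifying that the restart indicator really keeps each epoch's starting point in the sublevel set where \eqref{dual_qg} holds (so that the quadratic growth constant $\sigma$ from Theorem~\ref{th_qg} can be used uniformly), and matching the combinatorial count of good epochs against the total iteration count so that $\beta$, $K_0$ and the factor $4/(p+2)$ come out exactly as stated.
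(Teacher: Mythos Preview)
Your proposal is correct and follows essentially the same route as the paper's own proof: establish a per-epoch contraction $e^{-2}$ for each ``good'' epoch (those with $K_r\ge K^*$) by combining the ARCD sublinear estimate with the quadratic growth \eqref{dual_qg} at the restart points, count the good epochs as $c_\beta(p)=2^{p-\beta}$, compute the total iteration count $k=(p+2)2^{p-1}K_0$, and take the ratio. The paper's written proof is in fact terser than yours---it simply quotes the chained contraction $\mathbb{E}[d(z^{0,2^p})-d^*]\le e^{-2c_\beta(p)}(d(y^0)-d^*)$ from \cite{FerQu:18} and then does the combinatorics---so your additional explanation of why the restart indicator keeps the epoch starts in the sublevel set and how $K^*$ is calibrated to force the prefactor below $e^{-2}$ is exactly the content that the paper offloads to the reference.
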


\begin{proof}
	Let us define the constant:   
	\begin{align*}
	c_{\beta}(p) = |\left\{ l < 2^p - 1 \; | \; K_l\ge 2^{\beta} K_0  \right\} | + 1 = 1 + \sum\limits_{k = \beta}^{p - 1} 2^{p - 1 - k} =  2^{p- \beta}.
	\end{align*}
Recall also that:
	\begin{align*}
	{  z^{j,p} }  = \begin{cases} \tilde{y}^p & \text{if} \; j = K_{p-1} \\ y^{j,p} & \text{otherwise}\end{cases}. 
	\end{align*}
	Thus, we have: $z^{K_{p-1},p} = z^{0,p+1}$.
	Note that $c_\beta(p)$ represents the number of epochs such that $K_l \ge K^*$. On the other hand, we have:
	\begin{align}
	\label{linrate1}
	\mathbb{E}\left[d(y^{2^p - 1})-  d^*\right]  &= \mathbb{E}\left[d(z^{0,2^p})-  d^*\right] \le e^{-2 c_{\beta}(p)} (d(z^{0,0}) - d^*) \\
	&= e^{-2 c_{\beta}(p)} (d(y^{0}) - d^*). \nonumber 
	\end{align}
Then, we get:
	\begin{align*}
	k = \sum\limits_{i = 0}^{2^p - 1} K_i 
	& = \sum\limits_{j = 0}^{p} \left| \left\{0 \le r <2^p -1 \; | \; K_r = 2^jK_0\right\} \right| \cdot 2^jK_0 + K_{2^p - 1} \\
	& = \sum\limits_{j = 0}^{p} 2^{p-1 - j}2^jK_0 + K_{2^p - 1} = (p + 2) 2^{p-1}K_0,
	\end{align*}
	and the relation \eqref{linrate1} implies:
	\begin{align*}
	\mathbb{E}\left[d(y^{k})-  d^*\right] 
	& \le \left( e^{-2 \frac{c_{\beta}(p)}{\sum\limits_{i=0}^{2^{p}-1} K_i }} \right)^{k} (d(y^{0}) - d^*)  = \left( e^{- \frac{2^{p-\beta + 1}}{(p + 2)2^{p-1} K_0 }} \right)^{k} (d(y^{0}) - d^*),
	\end{align*}
	which confirms the above result.	\hfill$\square$
\end{proof}

\noindent If we define a primal sequence, as in the RCD case,  given by: 
\[ x^k  =\arg \min_x \langle - \sum_{i=1}^m y_i^k, x \rangle - g(x), \]   
then from relation \eqref{eq:connection-xy} and Theorem \ref{th_convrate_dualfun-a}  we can also derive a linear rate in terms of the expected quadratic distance of this primal sequence $x^k$ to the optimal solution $x^*$:
\[   \mathbb{E} [ \| x^k - x^*\|^2 ]   \leq \frac{2}{\sigma m}    \left( e^{- \frac{4}{(p + 2)2^{\beta} K_0 }} \right)^{k}   \left( d(y^{0}) - d^* \right).   \]

\begin{remark}
\label{rem:racd}
From Theorem \ref{th_convrate_dualfun-a}  it follows that  an upper bound on the total number of iterations performed by the Restarted-RACD	scheme to attain an $\epsilon-$suboptimal solution in expectation is given by:
$$ \mathcal{O}\left(\frac{m}{ \sqrt{\sigma}}\log \left( \frac{1}{\epsilon} \right)  \log_2 \left( \log \left( \frac{ d(y^0) - d^*}{\epsilon} \right) \sqrt{\frac{\bar{\sigma}}{\sigma}} \right) \right),$$
where recall that $\bar{\sigma}$ is an estimate of the unknown constant  $\sigma$. If we assume for simplicity that $d(y^0) - d^* \leq 1$ and $\sigma$ is known, then the previous estimate corresponding to Restarted-RACD is better than the estimate from Remark \ref{rem:rcd}  corresponding to RCD, provided that $\epsilon$ is sufficiently large. More precisely, the desired accuracy and sigma must satisfy $  \log_2 (\log(1/\epsilon))  \leq \sqrt{\sigma} + 1/\sqrt{\sigma}$.
For instance, if $\sigma = 10^{-2}$, then Restarted-RACD has a better worst case complexity that RCD for all accuracies  $\epsilon > 10^{-477}$.
\end{remark}


\section{ Dykstra type algorithms}
\label{sec:dykstra}
Let us now consider the application of the results from the previous sections to  the best approximation problem i.e.  finding  the best approximation  to a given  point   $v \in \rset^n$ from  the intersection of some closed convex sets $\cap_{i=1}^m  X_i$. For convenience, we recall this problem  here: 
\begin{align}
\label{bap-d}
\min_{x \in \rset^n} & \;  \frac{1}{2}\norm{x -v}^2    \quad   \text{s.t.}  \;\;\; x \in \bigcap_{i=1}^m  X_i.
\end{align} 
Note that for this particular problem the objective function  $g(x) = \frac{1}{2}\norm{x -v}^2$ is $1$-strongly convex and with $1$-Lipschitz continouos gradient.  Moreover, the optimal solution of the best approximation  problem  is $x^* = \Pi_{\cap_{i=1}^m X_i}(v)$.  Given the particular structure of \eqref{bap-d} we can derive  a  tighter  relation between the primal and dual variables $(x(y), y)$ than in  \eqref{eq:connection-xy}. 

\begin{theorem}
\label{th:pd}
For the best approximation problem, where the sets  $\{X_i\}_{i=1}^m$ satisfy Slater's condition,   the following relation holds:
\[   \frac{1}{2} \| x(y)  - \Pi_{\cap_{i=1}^m X_i}(v)   \|^2 \leq d(y) - d^* \qquad \forall y \in \rset^{mn}.   \]
\end{theorem}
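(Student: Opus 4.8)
The plan is to exploit the very explicit form of the quadratic objective $g(x)=\tfrac12\|x-v\|^2$ so as to reduce the asserted estimate to the trivial observation that the optimal point $x^*=\Pi_{\cap_{i=1}^m X_i}(v)$ lies in every $X_i$. First I would make all the objects explicit: a one–line maximization shows that the Fenchel conjugate of $g$ is $g^*(z)=\tfrac12\|z\|^2+\langle v,z\rangle$, hence $\nabla g^*(z)=z+v$. Consequently, for any dual point $y=(y_1;\cdots;y_m)$, setting $s=\sum_{i=1}^m y_i$, the associated primal point is
\[
x(y)=\nabla g^*\Big(-\sum_{i=1}^m y_i\Big)=v-s,
\]
the smooth part of the dual is $\tilde d(y)=g^*(-s)=\tfrac12\|s\|^2-\langle v,s\rangle$, and therefore $d(y)=\tfrac12\|s\|^2-\langle v,s\rangle+\sum_{i=1}^m\text{supp}_{X_i}(y_i)$. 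Since the $X_i$ satisfy Slater's condition, strong duality holds \cite{RocWet:98}, so $d^*=-g(x^*)=-\tfrac12\|x^*-v\|^2$.

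It then remains to expand squares. With the shorthand $w=v-x^*$ one has $\tfrac12\|x(y)-x^*\|^2=\tfrac12\|w-s\|^2=\tfrac12\|w\|^2-\langle w,s\rangle+\tfrac12\|s\|^2$, whereas $d(y)-d^*=\tfrac12\|s\|^2-\langle v,s\rangle+\sum_{i=1}^m\text{supp}_{X_i}(y_i)+\tfrac12\|w\|^2$. Subtracting, the quadratic terms cancel and, using $w-v=-x^*$,
\[
\big(d(y)-d^*\big)-\tfrac12\|x(y)-x^*\|^2=\sum_{i=1}^m\text{supp}_{X_i}(y_i)+\langle w-v,s\rangle=\sum_{i=1}^m\big(\text{supp}_{X_i}(y_i)-\langle y_i,x^*\rangle\big).
\]
Since $x^*\in\bigcap_{i=1}^m X_i\subseteq X_i$, we have $\langle y_i,x^*\rangle\le\max_{x\in X_i}\langle y_i,x\rangle=\text{supp}_{X_i}(y_i)$ for each $i$, so every summand on the right-hand side is nonnegative (the inequality being trivially true anyway whenever some $\text{supp}_{X_i}(y_i)=+\infty$), and the claim follows for all $y\in\R^{mn}$.

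There is no genuinely hard step here: the only place an assumption enters is the use of strong duality to identify $d^*$ with $-\tfrac12\|x^*-v\|^2$, and the only inequality invoked, $\langle y_i,x^*\rangle\le\text{supp}_{X_i}(y_i)$, is immediate from feasibility of $x^*$. The single thing to be careful about is the bookkeeping of signs and constants in the conjugate computation and in the cancellation of the squared norms.
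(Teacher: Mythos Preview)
Your proof is correct and follows essentially the same approach as the paper: both arguments compute the explicit conjugate of the quadratic $g$, expand $\tfrac12\|x(y)-x^*\|^2$ and $d(y)-d^*$, cancel the quadratic terms, and reduce the claim to the inequality $\text{supp}_{X_i}(y_i)\ge\langle y_i,x^*\rangle$, which holds because $x^*\in X_i$. The only cosmetic difference is that you invoke strong duality directly to write $d^*=-\tfrac12\|x^*-v\|^2$, whereas the paper expresses $d^*$ through a dual optimum $y^*$ and then uses the KKT conditions $\text{supp}_{X_i}(y_i^*)=\langle x^*,y_i^*\rangle$ and $v-\sum_i y_i^*=x^*$ to arrive at the same cancellation.
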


\begin{proof}
Note that for the best approximation problem, since the sets $\{X_i\}_{i=1}^m$ satisfy Slater's condition, then  there is no duality gap and  the relation between  the primal and dual variables is given by:  
\[ x(y) = v  - \sum\limits_{j=1}^m y_j . \]
Hence, we can write the dual function explicitly in terms of  $x(y)$ as follows:
\[   d(y) = \frac{1}{2} \| x(y)\|^2  -  \frac{1}{2} \|v\|^2  + \sum_{i=1}^m \text{supp}_{X_i} (y_i).  \]
Similarly, the optimal value $d^*$ can be written in terms of some optimal dual variable  $y^* \in Y^*$ as:
\[   d^* = d(y^*) = \frac{1}{2} \| \Pi_{\cap_{i=1}^m X_i}(v)  \|^2  - \frac{1}{2}  \|v\|^2  + \sum_{i=1}^m \text{supp}_{X_i} (y_i^*).  \] 
Using these relations, we further have:
\begin{align*}
& \frac{1}{2} \| x(y)  - \Pi_{\cap_{i=1}^m X_i}(v)   \|^2  =   \frac{1}{2} \| x(y)\|^2   + \frac{1}{2} \| \Pi_{\cap_{i=1}^m X_i}(v)  \|^2 - \langle x(y), \Pi_{\cap_{i=1}^m X_i}(v) \rangle \\
& = d(y) - d^*  +  \| \Pi_{\cap_{i=1}^m X_i}(v)  \|^2 - \langle x(y), \Pi_{\cap_{i=1}^m X_i}(v) \rangle + \sum_{i=1}^m \text{supp}_{X_i} (y_i^*) - \text{supp}_{X_i} (y_i). 
\end{align*}
However,  from the optimality conditions of the dual problem we have $\text{supp}_{X_i} (y_i^*) = \langle  \Pi_{\cap_{i=1}^m X_i}(v), y_i^*\rangle $  and $v  - \sum\limits_{j=1}^m y_j^* = \Pi_{\cap_{i=1}^m X_i}(v)$. Using these relations, we further get:
\begin{align*}
& \frac{1}{2} \| x(y)  - \Pi_{\cap_{i=1}^m X_i}(v)   \|^2  =   \\
& = d(y) - d^*  +  \| \Pi_{\cap_{i=1}^m X_i}(v)  \|^2 - \langle x(y), \Pi_{\cap_{i=1}^m X_i}(v) \rangle + 
\langle  \Pi_{\cap_{i=1}^m X_i}(v),  \sum_{i=1}^m y_i^*\rangle \\ 
&  \qquad -  \sum_{i=1}^m \text{supp}_{X_i} (y_i)\\
& =  d(y) - d^*  +  \| \Pi_{\cap_{i=1}^m X_i}(v)  \|^2 - \langle x(y), \Pi_{\cap_{i=1}^m X_i}(v) \rangle +  \langle  \Pi_{\cap_{i=1}^m X_i}(v),  v - \Pi_{\cap_{i=1}^m X_i}(v) \rangle \\ 
& \qquad  -  \sum_{i=1}^m \text{supp}_{X_i} (y_i)\\ 
& = d(y) - d^*  + \langle v -  x(y), \Pi_{\cap_{i=1}^m X_i}(v) \rangle  -  \sum_{i=1}^m \text{supp}_{X_i} (y_i)\\
& = d(y) - d^*  + \langle \sum\limits_{j=1}^m y_j, \Pi_{\cap_{i=1}^m X_i}(v) \rangle  -  \sum_{i=1}^m \text{supp}_{X_i} (y_i) \\
& = d(y) - d^*   +   \sum_{i=1}^m  \left( \langle y_i, \Pi_{\cap_{i=1}^m X_i}(v) \rangle   - \text{supp}_{X_i} (y_i) \right) \leq d(y) - d^*,
\end{align*}  
where in the last inequality we used that $\text{supp}_{X_i} (y_i) \geq \langle y_i, \Pi_{\cap_{i=1}^m X_i}(v) \rangle $. Indeed, this follows from the definition of the support function of the set $X_i$ and the fact that  $\Pi_{\cap_{i=1}^m X_i}(v)  \in X_i$, i.e.:
\[  \text{supp}_{X_i} (y_i) = \max_{x_i \in X_i} \langle y_i, x_i \rangle \geq  \langle y_i, \Pi_{\cap_{i=1}^m X_i}(v) \rangle. \]
This concludes our proof.   \hfill$\square$
\end{proof}
 
\noindent One of the first projection-based schemes for finding the projection of a point  into an  intersection, i.e. for solving the best approximation  problem \eqref{bap-d}, is the Dykstra algorithm \cite{BoyDyk:86,ComPes:11}. The initial variant proposed in  \cite{BoyDyk:86} performs projections in a cyclic fashion:  

\begin{align*}
\begin{cases}
& \textbf{Dykstra}:\\
& \text{Set} \; x^0 = v, \;\; y^{-(m-1)} = y^{-(m-2)} = \cdots = y^0 = 0 \\
& \text{For} \; k \ge 0 \; \text{do}: \\
&  x^{k+1}  = \Pi_{X_{((k+1) \; \text{mod} \;  m)}}(x^k + y^{k+1-m}) \\ 
& y^{k+1} = y^{k+1-m} +  x^k - x^{k+1}
\end{cases}
\end{align*}

\noindent  It has been shown that the sequence $\{x^k\}_{k \ge 0}$ generated by Dysktra algorithm  convergence linearly  towards the projection $\Pi_{\cap_{i=1}^m X_i}(v)$, provided that the sets $X_i$ are all polyehdral, see e.g., \cite{DeuHun:94,Pan:17}. It is natural to ask whether such a scheme has a similar linear convergence behavior for more general sets. To the best of our knowledge, it has not been answered yet to this question. In this section we   answer positively, proving that a random variant of Dykstra algorithm converges linearly when the collection of the sets $\{X_i\}_{i=1}^m$  satisfies  the Slater's condition, i.e. there exists $1 \leq r \le m$ such that the sets $X_{r+1}, \cdots, X_m $ are polyhedral and there exists $\bar x \in  \left(\bigcap_{i=1}^r \ri(X_i) \right) \bigcap \left( \bigcap_{i=r+1}^m X_i \right) $.  In order to prove this, it is important to recognize that RCD algorithm applied directly on the dual of  the best approximation problem \eqref{bap-d} leads to  a randomized variant of Dykstra algorithm.   Indeed, let $y^0 =0$ and $x^0  = v$.  Recall that for the best approximation problem  the relation between  the primal and dual variables is given by $x(y) = v  - \sum_{j=1}^m y_j $. Furthermore, in this particular case $\nabla_i \tilde{d}(y) = - \nabla g^* (- \sum_{j=1}^m  y_j ) = \sum_{j=1}^m  y_j -v$. Then,  the RCD iteration can be written explicitly  in terms of projections  of a primal-dual sequence $(x^k,y^k)$  as:
\[   x^k =  x(y^k) = v - \sum\limits_{j=1}^m y_j^k \quad \text{and} \quad  y_{i_k}^{k+1} = y_{i_k}^{k} +  x^k -  \Pi_{X_{i_k}} (y_{i_k}^{k} +  x^k).  \]
From these relations we can easily notice that:
\[  x^{k+1} = v - \sum\limits_{j=1}^m y_j^{k+1} =  v - \sum\limits_{j=1}^m y_j^{k} + y_{i_k}^{k} - y_{i_k}^{k+1} =  x^k + y_{i_k}^{k} - y_{i_k}^{k+1} =  \Pi_{X_{i_k}}  \left(x^k + y_{i_k}^k \right).  \]
Hence,  RCD algorithm  becomes  Randomized Dykstra, which updates the  primal-dual sequences $\{x^k,y^k\}_{k\geq 0}$ as follows: 

\begin{align*}
\begin{cases}
&  \textbf{Random Dykstra}:\\
&  \text{Set} \; x^0 = v, \;\; y^0 = 0. \; \text{For} \;  k \geq 0 \; \text{do}: \\
&\text{Choose uniformly a random index} \; i_k \in [m] \; \text{and update:} \\
&x^{k+1} =  \Pi_{X_{i_k}}  \left(x^k + y_{i_k}^k \right) \\
&y_{i_k}^{k+1} = y_{i_k}^k + x^k -x^{k+1}, \;\; y_{j}^{k+1} = y_{j}^k \;\;\;  \forall j \not = i_k.
\end{cases}
\end{align*}

\noindent Note that  the Random Dykstra  algorithm requires at each iteration one  projection onto a single set from the intersection  and few vector operations in $\rset^n$.  Hence, it can be  efficiently implemented in practice, provided that each set from the intersection is simple, which recall it is one of our basic assumptions.  Moreover,  for the best approximation problem $d(0) = 0$ and thus $d(y^0) - d^* = d(0) - d^* = - d^* = g^* =  1/2 \|  v -  \Pi_{\cap_{i=1}^m X_i}(v) \|^2$. Then, combining the result of Theorem \ref{th:pd}  with the convergence rate of RCD from Theorem \ref{th_convrate_dualfun}, we get immediately the following convergence rate for the primal iterates of the Randomized Dykstra algorithm.

\begin{corollary}
If the collection of sets $\{X_i\}_{i=1}^m$ of the best approximation problem  \eqref{bap-d} satisfy Slater's condition, then there exists some constant $\sigma>0$ such that the primal sequence  $\{x^k\}_{k\geq 0}$ of  the Random Dykstra algorithm has the following linear  convergence rate in expectation:
\begin{align*}
& \mathbb{E}[ \| x^{k}  -  \Pi_{\cap_{i=1}^m X_i}(v) \|^2 ] \le  \left( 1 - \frac{\sigma}{m (\sigma+1)}  \right)^k  \left( \|  v -  \Pi_{\cap_{i=1}^m X_i}(v) \|^2 + \text{dist}^2(0,Y^*)   \right).
\end{align*} 
\end{corollary}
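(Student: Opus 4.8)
\emph{Proof proposal.} The plan is to read off the Corollary as a direct assembly of three facts already in hand: the identification of Random Dykstra with RCD applied to the dual \eqref{dbap} of the best approximation problem \eqref{bap-d}, the global primal--dual estimate of Theorem \ref{th:pd}, and the linear rate of RCD from Theorem \ref{th_convrate_dualfun}.

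First I would invoke the computation performed immediately before the statement, which shows that the primal--dual iterates $(x^k,y^k)$ of Random Dykstra coincide with $(x(y^k),y^k)$, where $\{y^k\}$ is exactly the RCD sequence for the dual problem \eqref{dbap} with $g(x)=\tfrac12\|x-v\|^2$ and initial point $y^0=0$; in particular $x^k=x(y^k)=v-\sum_{j=1}^m y_j^k$ for every $k$. Since $g$ is $1$-strongly convex with $1$-Lipschitz gradient and the sets satisfy Slater's condition, Theorem \ref{th_qg} supplies a quadratic growth constant $\sigma=\sigma(y^0)=\sigma(0)>0$ valid on the sublevel set $\{y:d(y)\le d(0)\}$, which is precisely the hypothesis needed to apply Theorem \ref{th_convrate_dualfun}; and by \eqref{boundedlev} the RCD sequence never leaves this sublevel set, so the growth estimate applies throughout.

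Next I would apply Theorem \ref{th:pd}, which holds for \emph{every} $y\in\rset^{mn}$, to $y=y^k$, obtaining $\tfrac12\|x^k-\Pi_{\cap_{i=1}^m X_i}(v)\|^2\le d(y^k)-d^*$, i.e. $\|x^k-\Pi_{\cap_{i=1}^m X_i}(v)\|^2\le 2(d(y^k)-d^*)$. Taking full expectations and inserting the rate of Theorem \ref{th_convrate_dualfun} gives
\[
\mathbb{E}\big[\|x^k-\Pi_{\cap_{i=1}^m X_i}(v)\|^2\big]\le 2\Big(1-\tfrac{\sigma}{m(\sigma+1)}\Big)^k\Big(d(y^0)-d^*+\tfrac12\,\text{dist}^2(y^0,Y^*)\Big).
\]
It then remains only to evaluate the bracket at $y^0=0$: by strong duality together with the explicit form of the dual, $d(0)=0$, hence $d(y^0)-d^*=-d^*=g^*=\tfrac12\|v-\Pi_{\cap_{i=1}^m X_i}(v)\|^2$, while $\text{dist}^2(y^0,Y^*)=\text{dist}^2(0,Y^*)$; substituting these, the leading factor $2$ absorbs the two factors $\tfrac12$ and the stated bound follows.

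There is no serious obstacle here: the Corollary is essentially a repackaging of Theorems \ref{th:pd}, \ref{th_qg} and \ref{th_convrate_dualfun}. The only points that merit a moment's care are making sure the constant $\sigma$ fed into Theorem \ref{th_convrate_dualfun} is the one attached to the sublevel set $\{y:d(y)\le d(0)\}$ (so that the monotonicity \eqref{boundedlev} keeps $\{y^k\}$ where the quadratic growth is valid), and noting that Theorem \ref{th:pd} is global in $y$, so that no additional feasibility bookkeeping for the primal iterates $x^k$ is required.
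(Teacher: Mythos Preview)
Your proposal is correct and follows essentially the same route as the paper: identify Random Dykstra with RCD on the dual, bound $\|x^k-\Pi_{\cap_i X_i}(v)\|^2$ by $2(d(y^k)-d^*)$ via Theorem~\ref{th:pd}, feed in the linear rate of Theorem~\ref{th_convrate_dualfun}, and evaluate the initial bracket at $y^0=0$ using $d(0)=0$ and strong duality. Your extra care in naming Theorem~\ref{th_qg} and the monotonicity \eqref{boundedlev} to justify that $\sigma$ applies along the whole trajectory is a welcome bit of explicitness, but it is exactly what the paper packages into the hypotheses of Theorem~\ref{th_convrate_dualfun}.
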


\begin{remark}
Note that the existing convergence results for Dykstra algorithm usually  require $y^0 =0$ \cite{BoyDyk:86,ComPes:11,DeuHun:94}. On the other hand,  our convergence analysis works for a general initialization $y^0$. For example, if we have available some $\bar{y}$ such that $d(\bar{y}) + 1/2 \text{dist}^2(\bar{y},Y^*)  < 1/2 \text{dist}^2(0,Y^*) $, then we should initialize Randomized Dykstra with this point, i.e. $y^0=\bar{y}$ instead of $y^0 =0$.   
\end{remark}

\noindent  In the accelerated case let us  define the following  primal sequences:  
\[ x^k = x(y^k) = v - \sum_{j=1}^m y_j^k,  \quad  \hat x^k = x(v^k) = v - \sum_{j=1}^m v_j^k,  \quad \tilde x^k = x(z^k) = v - \sum_{j=1}^m z_j^k. \]  
Then,  we obtain the following  primal-dual updates: 
\begin{align*}
\hat x^k  &= v - \sum_{j=1}^m v_j^k = v - \sum_{j=1}^m \Big((1 - \theta_k) y^k_j + \theta_k z^k_j\Big)  \\ 
& =  (1 - \theta_k) \Big(v - \sum_{j=1}^m y^k_j\Big)  + \theta_k \Big(v -\sum_{j=1}^m  z^k_j \Big) = (1-\theta_k) x^k + \theta_k \tilde x^k,  \\
z_{i_k}^{k+1} & = \text{prox}_{ \frac{1}{\theta_k m} \cdot \text{supp}_{X_{i_k}}} \left(z^k_{i_k} + \frac{1}{\theta_k m} x(v^k) \right) =  \text{prox}_{ \frac{1}{\theta_k m}  \cdot \text{supp}_{X_{i_k}}} \left(z^k_{i_k} + \frac{1}{\theta_k m} \hat x^k  \right)\\
&= z^k_{i_k} +  \frac{1}{\theta_k m} \hat x^k   -
 \frac{1}{\theta_k m} \Pi_{{X_{i_k}}}\left(\theta_k m z^k_{i_k} + \hat x^k\right), \\
x^{k+1} &= v - \sum_{j=1}^m y_j^{k+1} = v - \sum_{j=1}^m \Big(v^k_j + m \theta_k (z^{k+1}_j - z^k_j)\Big) \\ & = \left(v - \sum_{j=1}^m v^k_j \right) -  m \theta_k  \left(  z^{k+1}_{i_k}  - z^k_{i_k} \right)  = \Pi_{{X_{i_k}}}\left(   \hat x^k + \theta_k m z^k_{i_k} \right),\\
\tilde x^{k+1} & =  v  - \sum_{j=1}^m z_j^{k+1}=  v  - \sum_{j=1}^m z_j^{k} - z_{i_k}^{k+1} + z_{i_k}^{k}  =  \tilde x^k - z_{i_k}^{k+1} + z_{i_k}^{k} \\ 
& =  \tilde x^k + \frac{1}{\theta_k m}  \left( x^{k+1}  -  \hat x^{k} \right).  
\end{align*}
Thus, on each epoch we apply the following Random Accelerated  Dykstra  algorithm, which updates  the primal-dual sequences $\{x^k, \hat x^k, \tilde x^k,z^k\}_{k \geq 0}$ as follows: 

\begin{align*}
\begin{cases}
& \textbf{Random Accelerated  Dykstra}(y^0,K):\\
&\text{Set} \; \theta_0 = \frac{1}{m} \; \text{and}\; y^0 = z^0 \; \text{and}\; x^0 = \tilde x^0 = v - \sum_{j=1}^m z_j^0.\\
& \text{For} \; k \in \{0, \cdots, K-1 \} \; \text{do:}\\
& \text{Choose uniformly random} \; i_k \in [m] \; \text{and update} \\
& \hat x^k = (1-\theta_k) x^k + \theta_k \tilde x^k \\
& x^{k+1} = \Pi_{{X_{i_k}}}\left(   \hat x^k + \theta_k m z^k_{i_k} \right) \\
& \tilde x^{k+1} = \tilde x^k + \frac{1}{\theta_k m}  \left( x^{k+1}  -  \hat x^{k} \right) \\
& z_{i_k}^{k+1} = z^k_{i_k} +  \frac{1}{\theta_k m}  \left( \hat x^k  - x^{k+1} \right), \;\;  z_j^{k+1} = z_j^k \;\; \forall j \neq i_k \\
& \theta_{k+1} = \frac{\sqrt{\theta_k^4 + 2 \theta_k^2} - \theta_k^2}{2}.
\end{cases}
\end{align*}

\noindent Note that $\hat x^k$ can be eliminated from the Random Accelerated  Dykstra algorithm and update only two primal sequences $\{x^k,  \tilde x^k\}_{k \geq 0}$ and one dual sequence $\{z^k\}_{k \geq 0}$.  We keep the above formulation to show the similarities between the accelerated Dykstra scheme and its non-accelerated counterpart.   Moreover,  compared to RACD, the new Random Accelerated  Dykstra  algorithm has a smaller memory footprint.  Furthermore, it requires at each iteration one single projection  and few vector operations in $\rset^n$. Hence, the  computational effort per  iteration for the Random Accelerated  Dykstra is comparable  to the  Random   Dykstra algorithm.   Moreover, from previous derivations, since the Restarted  Random Accelerated  Dykstra  is equivalent to the Restarted-RACD scheme,  it is obvious that we maintain the rate of convergence from Theorem \ref{th_convrate_dualfun-a}.  More precisely, assuming that we initialize  the Restarted  Random Accelerated  Dykstra  in the first epoch with  $y^0=0$, then  combining Theorems \ref{th_convrate_dualfun-a} and  \ref{th:pd},  we get:  

\begin{corollary}
\label{cor:arcd}
If the collection of sets $\{X_i\}_{i=1}^m$ of the best approximation problem  \eqref{bap-d} satisfy Slater's condition, then  the primal sequence $\{x^k\}_{k\geq 0}$ of  the Restarted Random Accelerated  Dykstra algorithm after $p$ epochs $\{K_0,\cdots,K_{p-1}\}$  has the following linear  convergence rate in expectation:
\begin{align*}
& \mathbb{E}[ \| x^{k}  -  \Pi_{\cap_{i=1}^m X_i}(v) \|^2 ] \le  \left( e^{- \frac{4}{(p + 2)2^{\beta} K_0 }} \right)^{k} \| v  -  \Pi_{\cap_{i=1}^m X_i}(v) \|^2,  
\end{align*} 
\end{corollary}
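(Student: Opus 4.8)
The plan is to reduce the statement to the convergence rate already proved for the Restarted-RACD scheme on the dual problem \eqref{dbap}, using the primal--dual dictionary established above. First I would note that, by the explicit algebra preceding the corollary, running RACD on the dual of \eqref{bap-d} with $g(x)=\frac12\|x-v\|^2$ is \emph{identical} to the Random Accelerated Dykstra algorithm under the identification $x^k = x(y^k) = v-\sum_{j=1}^m y_j^k$ (and similarly $\hat x^k = x(v^k)$, $\tilde x^k=x(z^k)$); since the restart bookkeeping is the same on both sides, the Restarted Random Accelerated Dykstra iterates coincide with the Restarted-RACD iterates $y^k$. Because $g$ is $1$-strongly convex with $1$-Lipschitz continuous gradient and the sets $\{X_i\}$ satisfy Slater's condition, the hypotheses of Theorem~\ref{th_primal-dual_qg} hold, so by Theorem~\ref{th_qg} there is $\sigma>0$ such that the dual quadratic growth \eqref{dual_qg} holds on the sublevel set determined by $y^0$, and Theorem~\ref{th_convrate_dualfun-a} applies.

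Next I would invoke Theorem~\ref{th_convrate_dualfun-a} with an epoch schedule $\{K_j\}$ satisfying conditions $(i)$--$(ii)$, which gives
\begin{align*}
\mathbb{E}[d(y^{k}) - d^*] \le \left( e^{- \frac{4}{(p + 2)2^{\beta} K_0 }} \right)^{k} \big(d(y^{0}) - d^*\big),
\end{align*}
and then I would specialize the initialization to $y^0=0$: since $g^*(0)=-\min_x g(x)=0$ and $\text{supp}_{X_i}(0)=0$ for every $i$, we have $d(0)=0$, and strong duality (valid under Slater) gives $d(y^0)-d^* = -d^* = \frac12\|v-\Pi_{\cap_{i=1}^m X_i}(v)\|^2$. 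Finally I would apply Theorem~\ref{th:pd}, which for the best approximation problem yields $\frac12\|x(y)-\Pi_{\cap_{i=1}^m X_i}(v)\|^2 \le d(y)-d^*$ for all $y$, in particular at $y=y^k$; chaining the three bounds and multiplying by $2$ gives
\begin{align*}
\mathbb{E}\big[\|x^{k}-\Pi_{\cap_{i=1}^m X_i}(v)\|^2\big] \le \left( e^{- \frac{4}{(p + 2)2^{\beta} K_0 }} \right)^{k}\|v-\Pi_{\cap_{i=1}^m X_i}(v)\|^2,
\end{align*}
which is the claim.

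The main obstacle is not any single inequality but making the algorithmic equivalence fully rigorous: one must verify that the restart test $d(\bar y^{r+1})\le d(\tilde y^{r})$ used inside Restarted-RACD can be evaluated from, or replaced by an equivalent condition on, the primal quantities produced by Random Accelerated Dykstra, and that the sublevel set enforced by the restart rule is exactly the one on which Theorem~\ref{th_qg} certifies the constant $\sigma$ that defines $K^*$ and $\beta$. I would also point out that although $\sigma$ does not appear in the displayed rate, it enters implicitly through $K^*$ and hence $\beta$, exactly as in Theorem~\ref{th_convrate_dualfun-a}.
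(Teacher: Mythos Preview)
Your proposal is correct and follows essentially the same route as the paper: the paper simply notes that Restarted Random Accelerated Dykstra is the primal incarnation of Restarted-RACD on the dual of \eqref{bap-d}, initializes at $y^0=0$ so that $d(y^0)-d^*=\tfrac12\|v-\Pi_{\cap_i X_i}(v)\|^2$, and then combines Theorem~\ref{th_convrate_dualfun-a} with Theorem~\ref{th:pd} to pass from the dual function gap to the primal squared distance. Your extra remarks about evaluating the restart test and the implicit dependence on $\sigma$ through $K^*$ and $\beta$ are accurate and go slightly beyond what the paper spells out, but the underlying argument is the same.
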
 
However, according to Remark \ref{rem:racd},  the  Random Accelerated  Dykstra with restart over epochs of length $\{K_0,\cdots,K_{r}, \cdots\}$, with $r \geq 0$ and $K_r$ taken as in Theorem \ref{th_convrate_dualfun-a}, the Random Accelerated  Dykstra   will usually lead to a faster convergence rate than the Random   Dykstra algorithm.




\end{document}